\newtheorem{theorem}{Theorem}
\newtheorem{lemma}[theorem]{Lemma}
\newtheorem{corollary}[theorem]{Corollary}
\theoremstyle{definition}
\newtheorem{assumption}[theorem]{Assumption}
\newtheorem{remark}[theorem]{Remark}
\newtheorem{example}[theorem]{Example}
\title{Variational regularization with oversmoothing penalty term in Banach spaces}
\author{Robert Plato\footnotemark[1]
	 \and Bernd Hofmann\footnotemark[2]}
\newcommand{\para}{\alpha}
\newcommand{\parb}{\beta}
\newcommand{\pardel}{\para_*}
\newcommand{\ix}{\mathcal{X}}
\newcommand{\yps}{\mathcal{Y}}
\newcommand{\pp}{p}
\newcommand{\qq}{q}
\newcommand{\fdelta}{f^\delta}
\newcommand{\Landau}{\mathcal{O}}
\newcommand{\refeq}[1]{(\ref{eq:#1})}
\newcommand{\G}{G}
\newcommand{\norm}[1]{\Vert \hspace{0.4mm} #1 \hspace{0.4mm} \Vert}
\newenvironment{myenumerate}{%
\begin{list}{(\alph{enumcount})}
{\setcounter{enumcount}{1}\usecounter{enumcount}
\setlength{\topsep}{1mm}
\setlength{\itemsep}{0mm}
\setlength{\labelwidth}{0mm}
\setlength{\labelsep}{1mm}
\setlength{\itemindent}{1mm}
\setlength{\leftmargin}{0mm}
}}{\end{list}}
\newcommand{\DF}{\Dset(\F)}
\newcommand{\Dset}{\mathcal{D}}
\newcommand{\myu}{u}
\newcommand{\ust}{u^\dagger}
\newcommand{\fst}{f^\dagger}
\newcommand{\fdel}{f^\delta}
\newcommand{\fn}{f_n}
\newcommand{\un}{u_n}
\newcommand{\hn}{h_n}
\newcommand{\tikreg}{Tikhonov regularization\xspace}
\newcommand{\tifu}{Tikhonov functional\xspace}
\newcommand{\tinybullet}{{\tiny \raisebox{0.6mm}{$ \bullet $}}}
\newenvironment{mylist}{%
\begin{list}{\tinybullet}
{\setlength{\topsep}{0.2cm}
\setlength{\itemsep}{0mm}
\setlength{\labelwidth}{0mm}
\setlength{\labelsep}{3mm}
\setlength{\itemindent}{3mm}
\setlength{\leftmargin}{0mm}
}}{\end{list}}
\newenvironment{mylist_indent}{%
\begin{list}{\tinybullet}
{\setlength{\topsep}{0.2cm}
\setlength{\itemsep}{0mm}
\setlength{\labelwidth}{2mm}
\setlength{\labelsep}{3mm}
\setlength{\itemindent}{0mm}
\setlength{\leftmargin}{5mm}
}}{\end{list}}
\newcommand{\kla}[1]{(#1)}
\newcommand{\mfrac}[2]{\dfrac{\mbox{\footnotesize \raisebox{-0.5mm}{$#1$}}}%
{\mbox{\footnotesize \raisebox{0.8mm}{$#2$}}}}
\newcommand{\proofend}{\qquad \endproof}
\newcommand{\Landauno}[1]{\Landau\kla{#1}}
\newcommand{\as}{\quad \text{as} \ \ }
\newcommand{\R}{\mathcal{R}}
\newcommand{\remarkend}{\quad \ensuremath{\vartriangle}}
\newcommand{\defeq}{:=}
\newcommand{\for}{\quad \text{for} \ \ }
\newcommand{\ints}[4]{\mathop{\raisebox{-0.1mm}{\mbox{\Large$ \textstyle \int $}}}\nolimits_{\hspace{-1mm}#1}^{#2} \hspace{-0.2mm} #3 \, #4}
\newcommand{\rhs}{right-hand side\xspace}
\newenvironment{myenumerate_indent}{%
\begin{list}{(\alph{enumcount})}
{\setcounter{enumcount}{1}\usecounter{enumcount}
\setlength{\topsep}{1mm}
\setlength{\itemsep}{0mm}
\setlength{\labelwidth}{5mm}
\setlength{\labelsep}{2mm}
\setlength{\itemindent}{-0mm}
\setlength{\leftmargin}{7mm}
}}{\end{list}}
\newcommand{\uhat}{\widehat{u}}
\newcommand{\inset}[1]{\{ \, #1 \, \}}
\newcommand{\ualpaux}[1][\parb]{\widehat{u}_{#1}}
\newcommand{\fab}{r(a+1)}
\newcommand{\norma}[1]{\norm{#1}_{-a}}
\newcommand{\normone}[1]{\norm{#1}_{1}}
\newcommand{\normmone}[1]{\norm{#1}_{-1}}
\newcommand{\normyps}[1]{\norm{#1}}
\newcommand{\normypsqua}[1]{\norm{#1}^\myr}
\newcommand{\normix}[1]{\norm{#1}}
\newcommand{\lfrac}[2]{#1/#2}
\newcommand{\ca}{c_a}
\newcommand{\cb}{C_a}
\newcommand{\upardel}[1][\para]{{u}_{#1}^\delta}
\newcommand{\updd}{\upardel[\pardel]}
\newcommand{\udel}{\updd}
\newcommand{\udelb}{u^\delta}
\newcommand{\tikfun}{T}
\newcommand{\pad}[2][\para]{\tikfun_{#1}^\delta(#2)}
\newcommand{\padn}[2][\para]{\tikfun_{#1}^n(#2)}
\newcommand{\padpur}[1][\para]{\tikfun_{#1}^\delta}
\newcommand{\padnpur}[1][\para]{\tikfun_{#1}^n}
\newcommand{\Jdel}[1]{\norm{\myF{#1} - \fdel}}
\newcommand{\myomega}[1]{\normonequa{#1}}
\newcommand{\myomegab}[1]{\normonequa{#1-\ubar}}
\newcommand{\myomegabp}[1]{\normone{#1-\ubar}}
\newcommand{\F}[1]{\Fpur#1}
\newcommand{\Fpur}{F}
\newcommand{\ubar}{\overline{u}}
\newcommand{\mainassump}{Let Assumption \ref{th:main_assump} be satisfied.\xspace}
\newenvironment{myenumerate_roman}{%
\begin{list}{(\roman{enumcountroman})}
{\setcounter{enumcountroman}{1}\usecounter{enumcountroman}
\setlength{\topsep}{0.2cm}
\setlength{\itemsep}{0mm}
\setlength{\labelwidth}{0mm}
\setlength{\labelsep}{3mm}
\setlength{\itemindent}{3mm}
\setlength{\leftmargin}{0mm}
}}{\end{list}}
\newcommand{\wrt}{with respect to\xspace}
\newcommand{\bh}[1]{#1}
\newcommand{\normonequa}[1]{\norm{#1}_{1}^\myr}
\newcommand{\myr}{r}
\newcommand{\myrr}{\tau}
\newcommand{\Jdelqua}[1]{\norm{F(#1) - \fdel}^\myr}
\newcommand{\cp}{c_p}
\newcommand{\fracpower}{fractional power\xspace}
\newcommand{\fracpowers}{\fracpower{}s\xspace}
\newcommand{\domain}{\mathcal{D}}
\newcommand{\range}{\mathcal{R}}
\newcommand{\mykap}{\kappa}
\newcommand{\RG}{\overline{\R(\G)}}
\newcommand{\linf}{L^\infty(0,1)}
\newcommand{\lone}{L^1(0,1)}
\newcommand{\myF}[1]{F(#1)}
\newcommand{\normmax}[1]{\norm{#1}_{\infty}}
\newcommand{\weakstararrow}{\rightharpoonup^\ast}
\newcommand{\mysl}{\mathcal{S}}
\newcommand{\minpad}[1][\para]{\tikfun_{#1,\ast}^\delta}
\newcommand{\er}{e_r}
\newcommand{\myseq}[1]{\{#1\}}
\newcommand{\ixtwo}{\overline{\R(G)}}
\newcommand{\postype}{non-negative type\xspace}
\newcommand{\Postype}{Non-negative type\xspace}
\newcommand{\loginv}[1]{\kla{\log#1}^{-1}}
\begin{document}
\date{}
\maketitle

\noindent
{\large
Dedicated to our distinguished colleague M. Thamban Nair on the occasion of his 65th birthday}

\renewcommand{\thefootnote}{\fnsymbol{footnote}}
\footnotetext[1]{Department of Mathematics, University of Siegen,
Walter-Flex-Str.~3, 57068 Siegen, Germany}
\footnotetext[2]{Faculty of Mathematics, Chemnitz University of Technology, 09107 Chemnitz, Germany.}

\newcounter{enumcount}
\newcounter{enumcountroman}
\renewcommand{\theenumcount}{(\alph{enumcount})}
\bibliographystyle{plain}
\begin{abstract}
In the present work, we discuss \bh{variational regularization for ill-posed nonlinear problems} with focus on an oversmoothing penalty term. This means in our model that
the searched-for solution of the considered nonlinear operator equation does not belong to the domain of definition of the penalty functional. In the past years, such variational regularization has been investigated comprehensively in Hilbert scales. Our present study tries to continue and to extend those investigations to Banach scales. This new study includes convergence rates results
for a priori choices of the regularization parameter,
both for H\"older-type smoothness and low order-type smoothness.
The necessary tools for low order smoothness in the Banach space setting are provided.
\end{abstract}

%


%
\section{Introduction}
\label{intro}
The subject of this paper are nonlinear operator equations of the form
\begin{equation} \label{eq:opeq}
 \F(u) = \fst \,,
\end{equation}
where $ \F: \ix \supset \DF \to \yps $ is a nonlinear operator
between infinite-dimensional Banach spaces $ \ix $ and $ \yps $ with norms $\|\cdot\|$.
We suppose that the \rhs $ \fst \in Y $ is approximately given as $ \fdelta \in \yps$ satisfying the deterministic noise model
\begin{equation} \label{eq:noise}
 \normyps{ \fdelta-\fst } \le \delta,
\end{equation}
with the noise level $\delta \ge 0$.
Throughout the paper, it is assumed that the considered equation~\eqref{eq:opeq} has a solution $ \ust \in \DF $ and is, at least at $ \ust$, locally ill-posed in the sense of \cite[Def.~3]{HofPla18}.

For finding stable approximations to the solution  $ \ust \in \DF $ of equation \eqref{eq:opeq}, we exploit a variant of variational regularization with regularization parameter $\para>0$, where the regularized solutions $\upardel$ are minimizers of the extremal problem
\begin{equation} \label{eq:TR}
\pad{\myu} \defeq \Jdelqua{\myu} + \para \myomega{\myu-\ubar} \to \min \quad \textup{subject to} \quad  \myu \in \DF,
\end{equation}
with some exponent $ r > 0 $ being fixed.
In addition, $\ubar \in \ix_1$ occurring in the penalty term of the Tikhonov functional $T_\alpha^\delta$ denotes an initial guess. In this context, $ \normone{\cdot} $ is a norm on a densely defined subspace $\ix_1$ of $\ix$, which is stronger than the original norm $ \normix{\cdot} $ in $ \ix $.
Note that we \bh{restrict our consideration here to identical exponents for the misfit term and the penalty functional. This restriction is actually only for technical reasons.}

Precisely, we define the stronger norm $\normone{\cdot}$ by
$ \normone{u} = \norm{G^{-1}u}, u \in \R(G) $, where the generator
$ G: \ix \to \ix $ with range $\R(G)$ is a bounded linear operator, which is one-to-one and has an unbounded inverse $ G^{-1} $. Further conditions on the operator $ G $ are given in Section
\ref{postype_operators} below.

In the present work, we discuss the nonlinear Tikhonov-type regularization \eqref{eq:TR} with focus on an oversmoothing penalty term. This means in our model that we have $ \ust \not \in \ix_1 $, or in other words $\|\ust\|_1=+\infty,$ which is an expression of
`non-smoothness' of the solution $\ust$ with respect to the reference Banach space $\ix_1$. Variational regularization of the form \eqref{eq:TR} with $r=2$ and oversmoothing penalty for nonlinear ill-posed operator equations \eqref{eq:opeq} has been investigated comprehensively in the past four years in Hilbert scales, and we refer to \cite{Hofmann_Mathe[18],HofPla20} as well as further to the papers \cite{GHH20,HofHof20,HHMP22,HofMat20}. For related results on linear problems, see, e.g.,
\cite{Natterer[77]} and more recently \cite{George[08],Nair_Pereverzev_Tautenhahn[05]}.
Our present study continues and extends, along the lines of \cite{HofPla20}, the investigations on nonlinear problems to Banach scales. This new study includes a fundamental error estimate, cf.
\refeq{fin} below, and
convergence results as well as convergence rates results
for a priori choices of the regularization parameter,
both for H\"older-type smoothness and low order smoothness.
The necessary tools for low order smoothness in the Banach space setting are provided.
In addition, a relaxed nonlinearity and smoothing condition on the operator $F$
is considered that turns out to be useful for maximum norms.

Banach space results for the discrepancy principle in a pure equation form have already been proven for the oversmoothing case in the recent paper \cite{CHY21}. In parallel, such results have been developed for oversmoothing subcases to variants of $\ell^1$-regularization and sparsity promoting wavelet regularization in \cite[Sec.~5]{Miller21} and \cite[Chap.~5]{MillerDiss22}.

The outline of the remainder is as follows: in Section~\ref{sec:preparations} we summarize prerequisites and assumptions for the main results in the sense of error estimates and convergence rates for the regularized solutions. These main results will then be presented in Section~\ref{sec:apriori_parameter_choices}.
An illustrative example to illuminate the general theory is given in Section~\ref{sec:example}.
The final Section~\ref{sec:verifications} contains the proofs of the main results, which in particular need the adapted construction of `smooth' auxiliary elements that approximate the `non-smooth' solution sufficiently well.
%

%
\section{Prerequisites and assumptions}
\label{sec:preparations}
In this section, some preparations are carried out. We introduce a scale of Banach spaces generated by an operator of positive type, introduce the logarithm of a positive operator and formulate the basic assumptions for this paper.
Moreover, we discuss well-posedness and stability assertions for the variant of variational regularization under consideration.

\subsection{\Postype operators, fractional powers, and regularization operators}
\label{postype_operators}
Let $ \ix $ be a Banach space and $ G: \ix \to \ix $ be a bounded linear operator of \postype, i.e.,
\begin{align}
& G + \parb I: \ix \to \ix \ \textup{one-to-one and onto},
\qquad \norm{(G + \parb I)^{-1}} \le \frac{\kappa_*}{\parb}, \quad \parb > 0,
\label{eq:postype}
\end{align}
for some finite constant $ \kappa_* > 0 $.
Fractionals powers of \postype operators may be defined as follows
\cite{Balakrishnan[59],Balakrishnan[60]}:
\begin{myenumerate_indent}
\item
For $ 0 < \pp < 1 $, the fractional power $ G^\pp : \ix \to \ix $ is defined by
\begin{align}
G^\pp u \defeq \mfrac{\sin \pi \pp }{\pi}
\ints{0}{\infty}{s^{\pp-1} (G + sI)^{-1} G u }{ ds}
\for u \in \ix.
\label{eq:frac-power}
\end{align}
\item
For arbitrary values $ \pp > 0 $,
the bounded linear operator $  G^\pp : \ix \to \ix $ is defined by
$$
G^\pp  \defeq G^{\pp - \lfloor \pp \rfloor} G^{\lfloor \pp \rfloor}.
$$
We moreover use the notation $ G^0 = I $.
\end{myenumerate_indent}
In what follows, we shall need
the interpolation inequality for \fracpowers of operators, see, e.g.,
\cite{Komatsu[66]} or
\cite[Proposition 6.6.4]{Haase[06]}:
for each pair of real numbers \linebreak $ 0 < \pp < \qq $,
there exists some finite constant $ c =c(\pp,\qq) > 0 $ such that
\begin{align}
\norm{ \G^\pp u }
\le c \norm{\G^\qq u}^{\pp/\qq} \norm{ u }^{1-\pp/\qq}
\for u \in \ix.
\label{eq:interpol2}
\end{align}
For $ 0 < \pp < 1 = \qq $, the value of the constant can be chosen as follows, $ c = 2(\kappa_* + 1)$, cf., e.g., \cite[Corollary 1.1.19]{Plato[95]}.
Throughout the paper, we assume that the operator $ \G $ is one-to-one
and that the inverse $ \G^{-1} $ is an unbounded operator.
Then for each $\pp > 0 $, the fractional power
$ G^{\pp} $ is also one-to-one, and we use the notation
$ G^{-\pp} = (G^\pp)^{-1} $.
We do not assume that the operator $ G $ has dense range in $ \ix $.

The scale of normed spaces $\{\ix_\tau\}_{\tau \in \mathbb{R}}$,
generated by $\G$, is given by the formulas
\begin{align}
& \ix_\tau=\range(\G^\tau) \ \textup{ for }\, \tau > 0, \qquad
\ix _\tau= \ix \ \textup{ for }\, \tau \le 0, \nonumber \\
& \|u\|_\tau:=\|\G^{-\tau} u\| \ \textup{ for }\, \tau \in \mathbb{R}, \ u \in \ix_\tau.
\label{eq:taunorm}
\end{align}
For $ \tau < 0 $, topological completion of the spaces
$X_\tau = \ix$ with respect to the norm $ \|\cdot\|_\tau $ is not needed in our setting.
We note that $ (\G^p)_{p\ge 0} $ defines a $ C_0 $-semigroup on
$ \ixtwo $,
which in particular means that $ \G^p \myu \to \myu $ for $ p \downarrow 0 $ is valid for any
$ u \in \overline{\R(G)} $ (cf.~\cite[Proposition 3.1.15]{Haase[06]}). Finally, we note that
\begin{equation} \label{eq:chain1}
\R(G^{\tau_2}) \subset\R(G^{\tau_1}) \subset \overline{\R(\G)} \quad \mbox{for all} \;0 < \tau_1 < \tau_2 < \infty.
\end{equation}
\subsection{The logarithm $ \mathbf{\log G } $}
For the consideration of low order smoothness, we need to introduce the logarithm of
$ \G $. For selfadjoint operators in Hilbert spaces this can be done by spectral analysis, and we refer in this context for example to \cite{Hohage[00],Mahale_Nair[07]}.
In Banach spaces, $ \log G $ may be defined as the infinitesimal generator of the $ C_0 $-semigroup $ (\G^p)_{p \ge 0} $ considered on $ \overline{\R(G)} $:
\begin{align*}
(\log \G) \myu & = \lim_{p \downarrow 0} \tfrac{1}{p}\kla{\G^p \myu-\myu},
\quad \myu  \in \domain(\log \G),
\end{align*}
where
\begin{align*}
\domain(\log\G) & =
\inset{ \myu \in \ix : \lim_{p \downarrow 0} \tfrac{1}{p}\kla{\G^p \myu- \myu}
\ \textup{exists} },
\end{align*}
cf.,~e.g.,~\cite{Nollau[69]} or \cite[Proposition 3.5.3]{Haase[06]}.
Low order smoothness of an element $ \myu \in \ix $ by definition then means
$ \myu \in \domain(\log \G) $. Note that
we obviously have $ \domain(\log \G) \subset \RG $. In addition, $ \R(\G^p) \subset \domain(\log \G)$ is valid for arbitrarily small $p>0$,
which follows from \cite[Satz 1]{Nollau[69]}. Summarizing the above notes, we have a chain of subsets of $\ix$ as
\begin{equation} \label{eq:chain2}
\R(\G^p) \subset \domain(\log \G)  \subset \overline{\R(\G)} \quad \mbox{for all} \;p>0.
\end{equation}
This means that also in the Banach space setting, any H\"older-type smoothness is stronger than low order smoothness.
\subsection{Main assumptions}
In the following assumption, we briefly summarize the structural properties of the operator $F$ and of its domain $\DF$, in particular with respect to the solution $\ust$ of the operator equation
\eqref{eq:TR}.
\begin{assumption}
\label{th:main_assump}
\begin{myenumerate_indent}
\item \label{it:conda}
The operator $ \F: \ix \supset \DF \to \yps $ is
continuous \wrt the norm topologies
of the spaces $ \ix $ and $ \yps $.

\item
The domain of definition $ \DF \subset \ix $ is a closed subset of $\ix$.
\item
Let $\Dset \defeq \DF \cap X_1 \neq \varnothing $.

\item
Let the solution $ \ust \in \DF $ to equation \eqref{eq:opeq} with \rhs $\fst$ be
an interior point of the domain $ \DF $.

\item
Let the data $ \fdelta \in \yps$ satisfy the noise model \eqref{eq:noise}, and let the
initial guess $\ubar$ satisfy $ \ubar \in \ix_1 $.

\item
\label{it:normequiv_b}
Let $ a > 0 $, and let there exist finite constants $ 0 < \ca \le \cb $
and $ c_0, c_1 > 0 $ such that
the following holds:
\begin{mylist}
\item For each $ u \in \Dset $ satisfying $ \norma{u-\ust} \le c_0 $, we have
\begin{align}
\label{eq:normequiv_a}
\normyps{\myF{\myu} - \fst} \le \cb\norma{\myu - \ust}.
\end{align}

\item
For each $ u \in \Dset $ satisfying $ \normyps{\myF{\myu} - \fst} \le c_1 $, we have
\begin{align}
\label{eq:normequiv_b}
\ca\norma{\myu - \ust} \le \normyps{\myF{\myu} - \fst}.
\end{align}
\end{mylist}
\item
\label{it:G-compact}
The operator $ G: \ix \to \ix$ is compact.

\item
\label{it:G-preadjoint}
The Banach space $ \ix $ has a separable predual space $\tilde \ix$ with $\tilde \ix^*=\ix$. Moreover, the operator $ G $ has a bounded preadjoint operator $\tilde G:\tilde \ix \to  \tilde \ix$, i.e., $\tilde G^*=G$.
\end{myenumerate_indent}
\end{assumption}
\begin{remark} \label{th:rem-assump}
Items \ref{it:G-compact} and \ref{it:G-preadjoint} are needed for the proof our version of well-posedness (existence and stability of regularized solutions in the norm of $ \ix $) in the sense of Theorem \ref{th:tifu-well-posed} below. Note that the preadjoint operator $\tilde G $ is necessarily also compact, see, e.g.,
\cite{Yosida[80]}.
\end{remark}
\begin{remark} \label{rem:rem1}
From the inequality \eqref{eq:normequiv_b} of item \ref{it:normequiv_b} in Assumption~\ref{th:main_assump}, we have for $\ust \in X_1$ that $\ust$ is the uniquely determined solution to equation \eqref{eq:opeq} in the set $\Dset$. For $\ust \notin X_1$, there is no solution at all to \eqref{eq:opeq} in $\Dset$. But in both cases, alternative solutions $u^* \notin X_1$ with $u^* \in \DF$ and $F u^*=\fst$ cannot be excluded in general. However, there is an exception if $u^*$ is an interior point of $\DF$. Then a solution  $u^* \in \overline{\R(G)}$ to equation \eqref{eq:opeq} with right-hand side $\fst$ satisfies with $u=u^*$ the inequality \eqref{eq:normequiv_b}. This is a
consequence of the continuity of $F$ from item \ref{it:conda} of Assumption~\ref{th:main_assump}.
\end{remark}

\subsection{Existence and stability of regularized solutions}
Recall that, for $\alpha>0$, minimizers of the Tikhonov functional $\padpur $ introduced in  \eqref{eq:TR}
are denoted by $ \upardel$, i.e., we have
$$\pad{\upardel} = \min_{\myu \in \DF} \ \pad{u}. $$
Evidently, by definition of the penalty term,
$ \upardel \in \Dset$ holds.

The extremal problem \eqref{eq:TR} for finding regularized solutions $ \upardel \in \Dset $ is well-posed  in a sense specified in the following Theorem~\ref{th:tifu-well-posed}. Precisely,
both existence of minimizers and stability with respect to data perturbations can be guaranteed for all $\alpha>0$ and all $\fdelta \in \yps$.

\begin{theorem}
\label{th:tifu-well-posed}
\mainassump
Then the following holds:
\begin{myenumerate_indent}
\item
For all $\alpha>0$ and all $\fdelta \in \yps$ there exists a minimizer $ \upardel$ of the \tifu $\padpur$, which belongs to the set $\Dset $.

\item
For $\alpha>0$,
each minimizing sequence of $\padpur$ over $\Dset$ has a subsequence that converges in the norm of $ \ix $ to a minimizer $ \upardel \in \Dset $ of the Tikhonov functional.

\item
For $\alpha>0$, the regularized solutions $ \upardel$ are stable in the norm of $ \ix $ with respect to small perturbations in the data $\fdelta\in \yps$.
\end{myenumerate_indent}
\end{theorem}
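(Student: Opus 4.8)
The plan is to establish all three items through a single compactness-and-lower-semi\-continuity argument, which is the Banach-space analogue of the classical direct method for Tikhonov functionals. The heart of the matter is the following \emph{stability property of the penalty}: if $(u_n) \subset \Dset$ is a sequence with $\sup_n \normone{u_n - \ubar} < \infty$, then $(u_n)$ possesses a subsequence converging in the norm of $\ix$ to some $u_* \in \Dset$, and moreover $\normone{u_* - \ubar} \le \liminf_n \normone{u_n - \ubar}$. Once this is available, existence, subsequential convergence of minimizing sequences, and stability all follow by routine comparison arguments.

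To prove the stability property, I would set $w_n := \G^{-1}(u_n - \ubar) \in \ix$, so that $u_n - \ubar = \G w_n$ and $\normix{w_n} = \normone{u_n - \ubar}$ is bounded. Since by Assumption \ref{th:main_assump}\ref{it:G-preadjoint} we have $\ix = \tilde\ix^*$ with $\tilde\ix$ separable, the Banach--Alaoglu theorem yields a subsequence (not relabelled) converging to some $w \in \ix$ in the weak-$*$ topology. The decisive step is that $\G = \tilde G^*$ is the adjoint of the compact operator $\tilde G$ (compact by Remark \ref{th:rem-assump}), and such an adjoint sends bounded weak-$*$ null sequences to norm null sequences: writing
$$
\normix{\G w_n - \G w} = \sup_{\norm{\psi} \le 1} \, \bigl| \langle w_n - w,\, \tilde G \psi \rangle \bigr|,
$$
the functionals $\langle w_n - w, \cdot\rangle$ are uniformly Lipschitz, hence equicontinuous, on the relatively compact set $\overline{\tilde G(\{\norm{\psi}\le 1\})}$, and they converge pointwise to $0$ there; equicontinuity plus pointwise convergence on a compact set forces uniform convergence, so the right-hand side tends to $0$. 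Therefore $u_n = \ubar + \G w_n \to u_* := \ubar + \G w$ in $\ix$. Closedness of $\DF$ gives $u_* \in \DF$, while $u_* - \ubar = \G w \in \R(\G) = \ix_1$ together with $\ubar \in \ix_1$ gives $u_* \in \Dset$; finally, weak-$*$ lower semicontinuity of the norm on $\tilde\ix^*$ yields $\normone{u_* - \ubar} = \normix{w} \le \liminf_n \normix{w_n} = \liminf_n \normone{u_n - \ubar}$.

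With this property established, items (a) and (b) are immediate: a minimizing sequence of $\padpur$ over $\Dset$ has uniformly bounded Tikhonov values, hence uniformly bounded penalty, so the stability property supplies a norm-convergent subsequence $u_n \to u_* \in \Dset$. Continuity of $\F$ (Assumption \ref{th:main_assump}\ref{it:conda}) makes the misfit term converge, and lower semicontinuity of $t \mapsto t^r$ composed with the norm handles the penalty, so $\pad{u_*} \le \liminf_n \pad{u_n} = \inf_{\Dset} \padpur$, i.e.\ $u_*$ is a minimizer. For the stability statement (c), I would take data $f_n \to \fdelta$ in $\yps$ and corresponding minimizers $u_n$ of the functional with $\fdelta$ replaced by $f_n$; comparing their values against a fixed element of $\Dset$ bounds the penalty uniformly, the stability property again extracts a norm-convergent subsequence $u_n \to u_*$, and passing to the limit in the minimality inequality --- using $f_n \to \fdelta$, continuity of $\F$, and lower semicontinuity of the penalty --- shows that $u_*$ minimizes $\padpur$.

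The main obstacle is exactly the upgrade from weak-$*$ to norm convergence of $(u_n)$ inside the stability property. Banach--Alaoglu alone only delivers weak-$*$ convergence of $(w_n)$, which is too weak to control $\F(u_n)$ or to exploit the closedness of $\DF$ in the norm topology; it is the compactness of $\G$ together with the predual/preadjoint structure, via the Arzel\`a--Ascoli-type uniform-convergence argument on the compact image $\overline{\tilde G(\{\norm{\psi}\le 1\})}$, that makes the images $\G w_n$ converge in norm. This is precisely the role played by items \ref{it:G-compact} and \ref{it:G-preadjoint} of Assumption \ref{th:main_assump}.
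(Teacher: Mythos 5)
Your proposal is correct and follows essentially the same route as the paper: Banach--Alaoglu on the preimages $G^{-1}(u_n-\ubar)$ in the dual of the separable predual, upgrade to norm convergence of $u_n$ via the compact preadjoint (you prove inline, by equicontinuity on the compact image $\overline{\tilde G(\{\|\psi\|\le 1\})}$, exactly the fact the paper cites from Gatica), weak-$*$ lower semicontinuity of the penalty, and then the direct method for (a)--(c). The only cosmetic difference is in (c), where you pass to the limit directly in the minimality inequality $\padn{u_n}\le\padn{\hat u}$ instead of first showing that the $u_n$ form a minimizing sequence of $\padpur$ via the expansion $(x+h)^r=x^r+\Landauno{h^{\min\{r,1\}}}$; both work.
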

The proofs of the above theorem and of the majority of subsequent results in Section~\ref{sec:apriori_parameter_choices}
are postponed to Section \ref{sec:verifications}.
Here we only note that the proof of Theorem \ref{th:tifu-well-posed}, more or less, rely on standard techniques and results related with the regularization of ill-posed minimization.
A special feature of the theorem, however, is stability with respect to the given norm on the space $ \ix $, not only with respect to the corresponding weak topology.
This is due to the fact that the norm in the penalty term is generated by an unbounded operator which has a compact inverse.
\begin{remark}
We note that the minimizer of the \tifu may be non-unique for nonlinear forward operators $F$,
because $T_\alpha^\delta$ can be a non-convex functional as a consequence of a non-convex misfit term
$\|\myF{\myu} - f^\delta\|^\myr$. This is, for example, the case when $\myF{u}:=u \star u$ represents the autoconvolution
operator with $\ix=\yps=\DF:=L^2(0,1)$ (cf., e.g., \cite{Buerger15}).
Then we have for $\ubar=0$ that
$T_\alpha^\delta(u)=T_\alpha^\delta(-u)$,
which illustrates the non-uniqueness phenomenon.
\end{remark}
It belongs to the main goals of this study to verify error estimates and derive convergence rates results for the variant \eqref{eq:TR} of variational regularization with oversmoothing penalty, where $ \ust \not \in \ix_1 $.
\section{Error estimate and a priori parameter choices}
\label{sec:apriori_parameter_choices}
We start with an error estimate result that provides the basis for the analysis of the regularizing properties, including convergence rates under a priori parameter choices. In what follows, we use the notation
\begin{align*}
\mykap \defeq \frac{1}{\fab}.
\end{align*}
\begin{theorem}
\label{th:upardel-esti}
\mainassump
Then there exist finite positive constants $ K_1, \para_0 $ and $ \delta_0 $ such that for $ 0 < \para \le \para_0 $ and $ 0 < \delta \le \delta_0 $,
an error estimate for the regularized solutions as
\begin{align}
\label{eq:fin}
\norm{\upardel -\ust} \le f_1(\para) + K_1 \frac{\delta}{\para^{\mykap a}}
\end{align}
holds, where $f_1(\para)$ for $ 0 < \para \le \para_0 $ is some bounded function satisfying:
\begin{mylist_indent}
\item (No explicit smoothness)
If $ \ust \in \overline{\R(G)} $, then $ f_1(\para) \to 0 $ as $ \para \to 0 $.
\item (H\"older smoothness)
If $ \ust \in \ix_\pp $ for some $ 0 < \pp \le 1 $, then
$ f_1(\para) = \Landauno{\para^{\mykap p}} $ as $ \para \to 0 $.
\item (Low order smoothness)
If $ \ust \in \domain(\log \G) $, then
$ f_1(\para) = \Landauno{\loginv{\frac{1}{\para}}} $ as $ \para \to 0 $.
\end{mylist_indent}
\end{theorem}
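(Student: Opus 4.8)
The plan is to bound $\|\upardel - \ust\|$ by comparing $\upardel$ with a family of smooth auxiliary elements $u_\eta \in \Dset$ that approximate the non-smooth solution $\ust$, and then to couple the auxiliary parameter $\eta$ to $\para$. For $\eta > 0$ I would use the resolvent-type smoothing $u_\eta = \ubar + \G(\G + \eta I)^{-1}(\ust - \ubar)$. Since $\G(\G+\eta I)^{-1}$ maps into $\R(\G) = X_1$ and $\ubar \in X_1$, we have $u_\eta \in X_1$; as $\ust$ is an interior point of $\DF$ and $\G(\G+\eta I)^{-1}$ is an approximate identity on $\overline{\R(\G)}$ while $\ust - \ubar \in \overline{\R(\G)}$ in all three cases (by \eqref{eq:chain1}, \eqref{eq:chain2}), we get $u_\eta \to \ust$ and hence $u_\eta \in \DF$, i.e.\ $u_\eta \in \Dset$, for small $\eta$. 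The core of this step is to quantify, as $\eta \downarrow 0$, the approximation errors $\|u_\eta - \ust\|$ and $\|u_\eta - \ust\|_{-a}$ and the penalty growth $\|u_\eta - \ubar\|_1 = \|(\G+\eta I)^{-1}(\ust - \ubar)\|$; moment estimates for $\G$ should give, under $\ust \in X_p$, the bounds $\|u_\eta - \ust\| = \Landauno{\eta^{p}}$, $\|u_\eta - \ust\|_{-a} = \Landauno{\eta^{p+a}}$ and $\|u_\eta - \ubar\|_1 = \Landauno{\eta^{p-1}}$, a logarithmic decay of $\|u_\eta - \ust\|$ under $\ust \in \domain(\log\G)$, and merely $\|u_\eta - \ust\| \to 0$ when only $\ust \in \overline{\R(\G)}$ is assumed.

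Next I would insert $u_\eta$ into the minimality inequality $\pad{\upardel} \le \pad{u_\eta}$ and estimate the right-hand side using the upper bound \eqref{eq:normequiv_a} and the noise model \eqref{eq:noise}, so that $\normyps{\F(u_\eta) - \fdelta} \le \cb\|u_\eta - \ust\|_{-a} + \delta$. Dropping alternately the penalty and the misfit term on the left yields one bound on $\normyps{\F(\upardel) - \fdelta}$ and one on $\|\upardel - \ubar\|_1$, both in terms of $\|u_\eta - \ust\|_{-a}$, $\delta$, $\para$ and $\|u_\eta - \ubar\|_1$; the penalty bound carries the characteristic factor $\para^{-1/r}$. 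Once $\para$ and $\delta$ are small enough that $\normyps{\F(\upardel) - \fst} \le c_1$, the lower bound \eqref{eq:normequiv_b} converts the misfit estimate into a bound for $\|\upardel - \ust\|_{-a}$.

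The decisive step is to split $\|\upardel - \ust\| \le \|u_\eta - \ust\| + \|\upardel - u_\eta\|$, where now both $\upardel$ and $u_\eta$ lie in $X_1$, and to estimate the second summand by the interpolation inequality \eqref{eq:interpol2}. Writing $w = \upardel - u_\eta = \G z$ and applying \eqref{eq:interpol2} with exponents $1$ and $a+1$ gives $\|w\| \le c\,\|w\|_{-a}^{1/(a+1)}\|w\|_1^{a/(a+1)}$, into which I feed the bounds on $\|\upardel - u_\eta\|_{-a}$ and $\|\upardel - u_\eta\|_1$ obtained from the previous step via the triangle inequality (against $\ust$ and $\ubar$, respectively). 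The exponent pair $(1/(a+1), a/(a+1))$ combined with the factor $\para^{-1/r}$ from the penalty bound produces precisely the noise term $K_1\,\delta/\para^{\mykap a}$ with $\mykap = 1/(r(a+1))$. Choosing $\eta = \para^{\mykap}$ to equilibrate the approximation and penalty contributions then collapses the remaining, noise-free part of the interpolation estimate to the order of $\|u_\eta - \ust\|$, which defines $f_1(\para)$; reading off the three approximation regimes gives the stated behaviour of $f_1$, and the unsmoothed case is handled by letting $\eta = \eta(\para) \to 0$ slowly so that $f_1(\para) \to 0$ without a rate.

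The step I expect to be the main obstacle is the auxiliary-element analysis in the Banach-space setting, and in particular the logarithmic approximation rate under $\ust \in \domain(\log\G)$: lacking the spectral calculus of the Hilbert space case, this requires the dedicated $\log\G$ machinery and careful resolvent estimates, which is exactly the low-order-smoothness toolkit the paper sets out to provide. A subsidiary difficulty is the bookkeeping needed to ensure that the several competing powers of $\eta$, $\para$ and $\delta$ arising in the interpolation all reduce, after the coupling $\eta = \para^{\mykap}$, to the two terms displayed in \eqref{eq:fin}.
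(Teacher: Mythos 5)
Your overall architecture coincides with the paper's proof: smooth auxiliary elements in $\Dset$ approximating $\ust$, the minimality inequality $T_\alpha^\delta(\upardel)\le T_\alpha^\delta(u_\eta)$ combined with the two-sided nonlinearity conditions \eqref{eq:normequiv_a}--\eqref{eq:normequiv_b} to bound $\norma{\upardel-\ust}$ and $\para^{1/\myr}\normone{\upardel-\ubar}$, the interpolation inequality \eqref{eq:interpol2} with exponents $1/(a+1)$ and $a/(a+1)$ applied to $\upardel-u_\eta$, and the coupling $\eta=\para^{\mykap}$; the three smoothness regimes are also treated as in the paper (uniform boundedness plus density, H\"older moment estimates, and the resolvent representation of $(\lambda I-\log\G)^{-1}$).

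There is, however, one concrete gap: your auxiliary family $u_\eta=\ubar+\G(\G+\eta I)^{-1}(\ust-\ubar)$ is classical Lavrentiev regularization, whose qualification (saturation index) is $p_0=1$. The estimate you rely on, $\norma{u_\eta-\ust}=\|S_\eta G^{a+p}w\|=\Landauno{\eta^{a+p}}$ for $\ust-\ubar=G^p w$ with $S_\eta=\eta(\G+\eta I)^{-1}$, requires $\|S_\eta G^q\|=\Landauno{\eta^{q}}$ up to $q=a+p$, whereas for the classical resolvent one only has $\|S_\eta G^q\|=\Landauno{\eta^{\min\{q,1\}}}$. Since $a>0$ is fixed by Assumption~\ref{th:main_assump} and $p$ ranges up to $1$, the case $a+p>1$ is unavoidable (and for $a\ge 1$, as in the example of Section~\ref{sec:example}, it occurs for every $p>0$); there your claimed bound $\Landauno{\eta^{p+a}}$ fails, the resulting rate degenerates, and for $a>1$ even the boundedness of $\eta^{-a}\norma{u_\eta-\ust}$ needed in the no-smoothness case breaks down. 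The remedy is exactly the saturation requirement the paper imposes: replace $(\G+\eta I)^{-1}$ by a regularizing family satisfying \eqref{eq:wachstum}--\eqref{eq:commute} with $p_0\ge 1+a$, e.g.\ the $m$-times iterated Lavrentiev method with integer $m\ge 1+a$, for which $S_\eta=\eta^m(\G+\eta I)^{-m}$ and \eqref{eq:abfall} holds for all $0\le p\le m$. With that replacement (the penalty estimate $\normone{u_\eta-\ubar}=\Landauno{\eta^{-1}}$ via Lemma~\ref{th:auxel-0} is unaffected), your argument goes through and is the paper's proof.
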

Theorem \ref{th:upardel-esti} allows us to derive regularizing properties
of variational regularization with oversmoothing penalty.
This will be the topic of subsequent considerations aimed at obtaining convergence and rates results for appropriate a priori parameter choices,
which culminate in Theorem~\ref{th:apriori}.
For evaluating the strength of smoothness for the three different occurring situations in Theorem~\ref{th:upardel-esti}
(no explicit smoothness, H\"older smoothness and low order smoothness) we recall the chain \eqref{eq:chain2} of range conditions.

The following main theorem is a direct consequence of Theorem~\ref{th:upardel-esti}, because its proof is immediately based on the error estimate \eqref{eq:fin} with the respective properties
of the function $f_1(\para)$.
\begin{theorem}
\label{th:apriori}
\mainassump
\begin{mylist}
\item (No explicit smoothness)
Let $ \ust \in \overline{\R(G)} $. Then for any a priori parameter choice
$\pardel=\para(\delta)$
satisfying
$ \pardel \to 0 $ and $ \tfrac{\delta}{\pardel^{\mykap a}} \to 0 $ as $ \delta \to 0 $,
we have
$$ \normix{u_{\pardel}^\delta-\ust} \to 0 \quad \textup{ as } \delta \to 0.
$$

\item (H\"older smoothness)
Let $ \ust \in \ix_p $ for some $ 0 < p \le 1 $.
Then for any a priori parameter choice satisfying
$ \para_*=\para(\delta) \sim  \delta^{1/(\kappa(p+a))} $
we have
\begin{align*}
\normix{\udel -\ust} = \Landauno{\delta^{p/(p+a)}} \as \delta \to 0.
\end{align*}
\item (Low order smoothness)
Let $ \ust \in \domain(\log G) $.
Then for any a priori parameter choice satisfying
$ \para_*=\para(\delta) \sim \delta $, we have
\begin{align*}
\normix{\udel -\ust} = \Landauno{\loginv{\tfrac{1}{\delta}}} \as \delta \to 0.
\end{align*}
\end{mylist}
\end{theorem}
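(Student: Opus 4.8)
The plan is to feed the three smoothness scenarios directly into the fundamental error estimate \eqref{eq:fin} of Theorem~\ref{th:upardel-esti} and, in each case, to balance its two summands $f_1(\para)$ and $K_1\,\delta/\para^{\mykap a}$ under the corresponding a priori choice $\para_*=\para(\delta)$. Before estimating, I would note that in all three cases $\para_*\to 0$ as $\delta\to 0$, so that $0<\para_*\le\para_0$ and $0<\delta\le\delta_0$ hold for all sufficiently small $\delta$; this is precisely what makes \eqref{eq:fin} applicable with $\para=\para_*$, and it is the one admissibility bookkeeping item one must not skip.

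For the case of no explicit smoothness, $\ust\in\overline{\R(G)}$, I would invoke part (i) of Theorem~\ref{th:upardel-esti}: since $\para_*\to 0$ and $f_1(\para)\to 0$ as $\para\to 0$, the first summand $f_1(\para_*)$ tends to zero, while the second summand $K_1\,\delta/\para_*^{\mykap a}\to 0$ holds by the very hypothesis imposed on the parameter choice. Summing the two yields $\normix{u_{\para_*}^\delta-\ust}\to 0$.

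For H\"older smoothness, $\ust\in\ix_p$ with $0<p\le 1$, part (ii) supplies $f_1(\para)=\Landauno{\para^{\mykap p}}$, and the choice $\para_*\sim\delta^{1/(\mykap(p+a))}$ is exactly the one equilibrating the two contributions. On the one hand,
$$f_1(\para_*)=\Landauno{\para_*^{\mykap p}}=\Landauno{\delta^{\mykap p/(\mykap(p+a))}}=\Landauno{\delta^{p/(p+a)}},$$
and on the other hand $\delta/\para_*^{\mykap a}\sim\delta^{\,1-\mykap a/(\mykap(p+a))}=\delta^{\,1-a/(p+a)}=\delta^{\,p/(p+a)}$. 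Both are of order $\delta^{p/(p+a)}$, which gives the claimed rate. The only arithmetic to watch is the cancellation of the factor $\mykap$ in the exponents; it is this cancellation that renders the resulting convergence rate independent of $\mykap$ (equivalently, of the exponent $r$).

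For low order smoothness, $\ust\in\domain(\log G)$, part (iii) gives $f_1(\para)=\Landauno{\loginv{\tfrac{1}{\para}}}$, so with $\para_*\sim\delta$ the first summand is $\Landauno{\loginv{\tfrac{1}{\delta}}}$. The noise term then reads $\delta/\para_*^{\mykap a}\sim\delta^{\,1-\mykap a}$, and here sits the only point needing genuine care: one must confirm $\mykap a<1$, which is ensured by the definition $\mykap=1/\fab$ together with the standing assumption $r>0$. Then $\delta^{\,1-\mykap a}$ is a strictly positive power of $\delta$, hence decays faster than any inverse logarithm and is absorbed into $\Landauno{\loginv{\tfrac{1}{\delta}}}$. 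This yields the asserted logarithmic rate. In summary, the theorem is a direct corollary of \eqref{eq:fin}: the substantive work resides entirely in Theorem~\ref{th:upardel-esti}, and what remains is the threefold balancing above, the only genuine obstacles being the admissibility $\para_*\in(0,\para_0]$ for small $\delta$ and the strict inequality $\mykap a<1$.
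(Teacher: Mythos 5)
Your proposal follows the same route as the paper: the paper offers no separate proof of Theorem~\ref{th:apriori}, stating only that it is a direct consequence of the error estimate \eqref{eq:fin} with the respective properties of $f_1(\para)$, and the three balancing computations you carry out (including the admissibility check $0<\para_*\le\para_0$ for small $\delta$ and the cancellation of $\mykap$ in the H\"older exponents) are exactly what that remark leaves implicit.

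The one step that does not hold up as written is your justification, in the low order case, that $\mykap a<1$ is \emph{ensured} by $\mykap=1/\fab$ together with $r>0$. The value of $\mykap$ is forced by the computation $\para^{1/\myr}\parb^{-1}=\para^{\mykap a}$ with $\parb=\para^{\mykap}$ in the proof of Lemma~\ref{th:upardel_lemma}, which gives $\mykap=\tfrac{1}{r(1+a)}$ and hence $\mykap a=\tfrac{a}{r(1+a)}$. This is $<1$ precisely when $r>\tfrac{a}{1+a}$; that covers every $r\ge 1$ (in particular the classical $r=2$), but not all $r>0$. For $r\le\tfrac{a}{1+a}$ the noise term $\delta/\para_*^{\mykap a}\sim\delta^{1-\mykap a}$ under the choice $\para_*\sim\delta$ does not tend to zero, and part (iii) as you argue it breaks down. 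The fix is either to restrict to $r>\tfrac{a}{1+a}$ or to replace $\para_*\sim\delta$ by $\para_*\sim\delta^{\theta}$ with $0<\theta<1/(\mykap a)$, which still gives $f_1(\para_*)=\Landauno{\loginv{\tfrac{1}{\delta}}}$ while making the noise term a strictly positive power of $\delta$. This caveat equally affects the paper's own formulation, so it is not a defect you introduced; but the reason you give for $\mykap a<1$ is not valid, and since you yourself single this out as ``the only point needing genuine care,'' it deserves the correct argument.
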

\section{An illustrative example}
\label{sec:example}
In what follows, we present an example with specific Banach spaces and nonlinear forward operator, which shows that the general mathematical framework developed in this paper is applicable. The considered basis space is $ \ix = \linf $
with the essential supremum norm $ \norm{\cdot} = \normmax{\cdot} $ possessing a separable predual space $\tilde \ix=L^1(0,1)$. The generator $G$  of the scale of normed spaces is given by
\begin{align*}
[Gu](x) = \int_0^x u(\xi) \, d\xi \qquad (0 \le x \le 1, \quad u \in \linf).
\end{align*}
Below we give some properties of $ G $:
\begin{mylist_indent}
\item
The operator $ G: \linf \to \linf $ is of \postype
with constant $ \kappa_* = 2 $,
see, e.g., \cite{Plato[95]}.

\item $G$ has a trivial nullspace and a non-dense range
$$ \R(G) = W^{1,\infty}_0(0,1) :=  \inset{ u \in W^{1,\infty}(0,1) : u(0) = 0 }, $$
with $$ \overline{\R(G)} =
C_0[0,1] := \inset{u \in C[0,1]:u(0) = 0 }.$$

\item
$G$ is a compact operator, which follows immediately from the Arzel\'{a}--Ascoli theorem.
\item
$G$ has a compact preadjoint operator $ \tilde G: \lone \to \lone $, which is characterized by $$ [\tilde Gv](x) = \int_x^1 v(\xi) \, d\xi \qquad (0 \le x \le 1, \quad v \in \lone).$$
\end{mylist_indent}
The nonlinear forward operator of this example is
$ F: \linf \to \linf $ given by
\begin{align*}
[F(u)](x) = \exp((Gu)(x)) \qquad (0 \le x\le 1,\quad u \in \linf).
\end{align*}
This operator $ F $ is Fr\'{e}chet differentiable on its domain of definition $ \domain(F) = \linf $, with
$ [F^\prime(u)] h=[F(u)] \cdot Gh $.
Now consider some function $ \ust \in \linf $ which is assumed to be fixed throughout this section. We then have
\begin{align*}
c_1 \le F\ust \le c_2 \ \textup{ on } [0,1], \; \mbox{with} \;
c_1 := \exp(-\normmax{G\ust}) > 0, \ c_2 \defeq \exp(\normmax{G\ust}),
\end{align*}
so that
\begin{align}
c_1 \vert Gh \vert \le  \vert F^\prime(\ust)h \vert \le c_2 \vert Gh \vert \quad \textup{on } [0,1] \qquad (h \in \linf).
\label{eq:Fprimebounds}
\end{align}
For any $ u \in \linf $, we denote by $ \Delta = \Delta(u) $ and
$ \theta = \theta(u) $ the following functions:
\begin{align*}
\Delta & \defeq Fu-F\ust \in \linf, \qquad \theta \defeq G(u-\ust) \in \linf.
\end{align*}
Thus,
$ \normmone{u-\ust} = \normmax{\theta} $, and we refer to \refeq{taunorm} for the definition of
$ \normmone{\cdot}$.

Below we show that the basic estimates \refeq{normequiv_a} and
\refeq{normequiv_b} are satisfied for that example with $a=1$. As a preparation, we note that
\begin{align}
\vert \Delta-  F^\prime(\ust) (u-\ust) \vert
\le \vert \theta \vert \, \vert \Delta \vert \quad \textup{on } [0,1],
\label{eq:hofmann-estimate}
\end{align}
and refer in this context to \cite[Sect.~4.4]{HHMP22}. \bh{In this reference. the  same $F$ is analyzed as an operator mapping in $L^2(0,1)$, where moreover
its relation to a parameter estimation problem
for an initial value problem of a first order ordinary differential equation
is outlined.}
\begin{myenumerate}
\item
We first show that \refeq{normequiv_a} holds. Even more general we show that it holds for any $ u \in \linf $ sufficiently close to $ \ust $, not only for $ u \in \ix_1$.
From \refeq{Fprimebounds} we have that
\begin{align*}
\vert \Delta-  F^\prime(\ust) (u-\ust) \vert
\ge
\vert \Delta \vert -  \vert F^\prime(\ust) (u-\ust) \vert
\ge
\vert \Delta \vert - c_2 \vert \theta \vert \quad \textup{on } [0,1],
\end{align*}
and \refeq{hofmann-estimate} then implies the estimate
\begin{align*}
\vert \Delta \vert - c_2 \vert \theta \vert
\le
\vert \theta \vert \ \vert \Delta \vert
 \quad \textup{on } [0,1].
\end{align*}
For any $ u \in \linf $ satisfying $ \normmax{\theta} \le \myrr < 1 $, we thus have
$ \vert \Delta \vert \le \myrr \vert \Delta \vert + c_2 \vert \theta \vert$
and therefore
$ (1-\myrr)\vert \Delta \vert \le c_2 \vert \theta \vert $
on $ [0,1] $.
This finally yields
$$ \tfrac{1-\myrr}{c_2} \normmax{\Delta} \le \normmax{\theta} \quad \textup{for }
\normmax{\theta} \le \myrr \qquad (0 < \myrr < 1),
$$
from which the first required nonlinearity condition \refeq{normequiv_a} follows immediately.

\item
We next show that \refeq{normequiv_b} holds, in fact for any $ u \in \linf $  sufficiently close to $ \ust $.
From \refeq{Fprimebounds} we have
\begin{align*}
\vert \Delta-  F^\prime(\ust) (u-\ust) \vert
\ge
\vert F^\prime(\ust) (u-\ust) \vert - \vert \Delta \vert
\ge
c_1 \vert \theta \vert - \vert \Delta \vert \quad \textup{on } [0,1],
\end{align*}
and \refeq{hofmann-estimate} then implies that
\begin{align*}
c_1 \vert \theta \vert \le \vert \Delta \vert
 + \vert \theta \vert \, \vert \Delta \vert
 \quad \textup{on } [0,1].
\end{align*}
For any $ 0 < \varepsilon < c_1 $ and $ u \in \linf $ satisfying $ \normmax{\Delta} \le c_1 - \varepsilon $, we thus have
$ c_1\vert \theta \vert \le \vert \Delta \vert + (c_1-\varepsilon) \vert \theta \vert
$
and therefore $ \varepsilon \vert \theta \vert \le \vert \Delta \vert $
on $ [0,1] $.
This provides us with the estimate $ \varepsilon \normmax{\theta} \le \normmax{\Delta},$ which is valid for
$\normmax{\Delta} \le c_1-\varepsilon \; (0 < \varepsilon < c_1).$
This, however, yields directly the second required nonlinearity condition \refeq{normequiv_b}.
\end{myenumerate}
\section{Constructions and verifications}
\label{sec:verifications}
\subsection{Proof of Theorem \ref{th:tifu-well-posed}}
The assertions of this theorem follow, in principle, from standard results on existence and stability of Tikhonov-regularized solutions, which had been presented for example in \cite[Sect.~3]{HKPS07}
and in the monographs \cite[Section~4.1.1]{Schusterbuch12},
\cite[Chapter 3.2]{Scherzetal09},
\cite[Chapter 2.6]{Tikhonov_Leonov_Yagola[98]}, and \cite{Vainikko[88],Vainikko[91.1]}.
Some more details on the applicability of the results in the given references will be given below.  We only note that, due to the compactness of the operator $ G $, we may consider strong topologies.
For convenience of the reader, below we present a detailed proof. We start with the properties of the involved mappings and sets.
\begin{myenumerate_roman}
\item
By assumption, the set $ D(F) $ is a closed subset of $ \ix $.

\item
Also by assumption, the operator $ \F: \ix \supset \Dset \to \yps $ is continuous \wrt the norms on $ \ix $ and $ \yps $.  This implies that the misfit functional
$\myu \in \Dset \mapsto \Jdel{u}^r $ is continuous on $ \ix $ \wrt the norm topology.

\item
We now consider the stabilizing functional $ \Omega: \ix \to [0,\infty] $ given by
$$  \Omega(\myu) = \left\{\begin{array}{ll} \normone{\myu - \ubar}^\myr,
& \textup{if } \myu \in \ix_1, \\
\infty & \textup{otherwise}.
\end{array}\right. $$
We next verify that, for all nonnegative constants $C$, the sublevel sets
$$ \mysl_C \defeq \inset{u \in \ix : \Omega(\myu) \le C} \subset \ix_1  $$
are precompact in $ \ix $ \wrt norm. For this purpose let $ \{\myu_n\}_{n=1}^\infty \subset \mysl_C $ and denote $ v_n \defeq G^{-1}(\myu_n-\ubar) $.
Since the considered space $ \ix $ has a separable predual space $\tilde X$, we may apply the Banach--Alaoglu theorem.
Thus there is a subsequence $\{v_{n_k}\}_{k=1}^\infty$ of the
in $\ix$ bounded sequence $\{v_{n}\}_{n=1}^\infty$, which is weakly* convergent to some element $v_0 \in \ix$.
Since $ G $ is the compact adjoint operator of a bounded linear operator $\tilde G$ in the predual space $\tilde X$,
we obtain the norm convergence
 $ Gv_{n_k} = \myu_{n_k} -\ubar \to u_0:=Gv_0\in \ix$ as $ k \to \infty$,
cf.~Gatica~\cite[Lemma~2.5]{Gatica[18]}.
Hence, we have $ \myu_{n_k} \to \ubar +  u_0 \in \ix $ as $ k \to \infty$.
This shows that $ \mysl_C $ is indeed precompact.

\item
We next show that each sublevel set $ \mysl_C $ is closed in $ \ix $
and that the stabilizing functional $ \Omega $ is lower semicontinuous on $ \mysl_C $, both \wrt the norm topology of $ \ix $.
For this, let $ \{\myu_n\}_{n=1}^\infty \subset \mysl_C $ and $ \myu \in \ix $ with $ \myu_n \to \myu $ as $ n \to \infty $.
This implies that $ G^{-1} \myu_n $ has a weak* convergent subsequence, i.e.,
$ G^{-1} \myu_{n_k} \weakstararrow v \in \ix $ as $ k \to \infty$ for some $ v \in \ix $.
Thus we obtain
strong convergence $ \myu_{n_k} \to Gv \in \ix $ as $ k \to \infty$.
Uniqueness of limits now implies $ u = Gv $, i.e., $ \myu \in \ix_1 = \R(G) $.
A subsequence reasoning shows that
$ G^{-1}\myu_n \weakstararrow G^{-1}\myu \in \ix $ as $ n \to \infty $,
and thus, cf.~\cite[Theorem 9 of Chapter V]{Yosida[80]},
 $ \normone{u-\ubar} \le \liminf_{n\to\infty} \normone{\myu_n-\ubar} $.
This completes the proof of the statement of item (iv).
\end{myenumerate_roman}
We are now in a position to verify the statements (a)--(c) of Theorem
\ref{th:tifu-well-posed}.
\begin{myenumerate}
\item This follows from part (b).

\item
Let $ \{\myu_n\}_{n=1}^\infty \subset \Dset $ be a minimizing sequence for the
\tifu, i.e.,
\begin{align*}
\pad{\myu_n} \to \minpad \defeq \inf_{u \in \Dset} \pad{\myu}.
\end{align*}
This implies
\begin{align*}
\limsup_{n\to\infty} \Omega(\myu_n)
\le \frac{1}{\para} \lim_{n\to\infty} \pad{\myu_n}
= \frac{1}{\para} \minpad,
\end{align*}
and thus $ \sup_{n} \Omega(\myu_n) < \infty $.
From the compactness of the sublevel sets of $ \Omega $ and the closedness of $ \DF $,
norm convergence of some subsequence in $ \Dset $ follows,
i.e., for some $ \udelb \in \Dset $ and some subsequence
$ \{\myu_{n_k}\}_{k=1}^\infty $, we have
strong convergence $ \myu_{n_k} \to \udelb $ as $ k \to \infty $.
The lower semicontinuity of $ \Omega $ and the continuity of $ F $ then imply
\begin{align*}
\myomegabp{\udelb} \le \liminf_{k\to\infty} \myomegabp{\myu_{n_k}},
\qquad
\norm{F(\udelb)-\fdel} = \lim_{k\to\infty} \norm{F(\myu_{n_k})-\fdel},
\end{align*}
and then
\begin{align*}
\pad{\udelb}
&\le
\lim_{k\to\infty} \norm{F(\myu_{n_k})-\fdel}^r + \liminf_{k\to\infty} \myomegab{\myu_{n_k}}
\\
&\le
\liminf_{k\to\infty} \big\{ \norm{F(\myu_{n_k})-\fdel}^r + \myomegab{\myu_{n_k}} \big\}
=
\liminf_{k\to\infty} \pad{\myu_{n_k}} = \minpad
\end{align*}
follows which in fact means
$ \pad{\udelb}  = \minpad $. This completes the proof of part (b) of the theorem.

\item For the verification of stability, consider perturbations of the \tifu of the following form,
\begin{align*}
\padn{u} = \norm{F(u) - \fn}^r + \para \myomegab{u}, \quad u \in \Dset \qquad (n=1,2,\ldots),
\end{align*}
where $ \myseq{f_n} \subset \yps $ with $ \norm{\fn-\fdel} \to 0 $ as $ n \to \infty $.
Let $ \un \in \Dset $ be a minimizer of the \tifu $ \padnpur \ (n = 1,2,\ldots) $ which exists according to part (a) of this theorem.
In what follows, we show that $ \myseq{\un} $ is a minimizing sequence for the original \tifu
$ \padpur $. Stability then follows immediately from part (b) of the theorem.

Let $ \udelb \in \Dset $ be a minimizer of $ \padpur $, and let
\begin{align*}
\hn \defeq \norm{\fn-\fdel}^{\min\{r,1\}}, \quad n = 1,2,\ldots \ .
\end{align*}
Utilizing those notations, we have
\begin{align}
\pad{\un} &=
\norm{F(\un) - \fdel}^r + \para \myomegab{\un}
\nonumber \\
& \le (\norm{F(\un) - \fn} + \norm{\fn -\fdel})^r + \para \myomegab{\un}
\nonumber \\
& \le \norm{F(\un) - \fn}^r + K h_n + \para \myomegab{\un}
= \padn{\un} + K h_n
\label{eq:tifu-well-posed-a} \\
& \le  \padn{\udelb} + K h_n
=
\norm{F(\udelb) - \fn}^r + \para \myomegab{\udelb} + K h_n
\nonumber \\
& \quad  \to
\norm{F(\udelb) - \fdel}^r + \para \myomegab{\udelb}
= \pad{\udelb}
\quad \textup{as} \ n \to \infty,
\nonumber
\end{align}
where $ K \ge 0 $ in \refeq{tifu-well-posed-a}
denotes some finite constant. We note that this inequality
\refeq{tifu-well-posed-a}
follows from the identity
\begin{align*}
(x+h)^r = x^r + \Landauno{h^{\min\{r,1\}}} \quad \textup{ as } h \downarrow 0
\end{align*}
which holds uniformly in $ x \ge 0 $ (on bounded intervals, if $ r \ge 1 $).
Note that
$ \norm{F(\un) - \fn}^r \le \padn{\un} \le \padn{\uhat} $
for any $ \uhat \in \Dset $,
so that
$ \limsup_{n} \norm{F(\un) - \fn}^r \le  \limsup_{n} \padn{\uhat} = \pad{\uhat} $, which implies that the sequence $ \myseq{\norm{F(\un) - \fn}}_n $ is indeed bounded.

We can summarize the above estimate to
\begin{align*}
\limsup_{n\to \infty} \pad{\un}
\le \lim_{n\to \infty} \padn{\udelb}
= \pad{\udelb},
\end{align*}
i.e., $ \myseq{\un} $ is a minimizing sequence for the \tifu $ \padpur $.
This completes the proof of the theorem.
\end{myenumerate}
\proofend
\begin{remark}
From the four items (i)--(iv), the statements (a)--(c)
of Theorem \ref{th:tifu-well-posed} basically follow from standard results on the existence and stability of Tikhonov-regularized solutions given in the references presented in front of the theorem.

\begin{mylist}
\item
For example, parts (a) and (c) are results of Theorems 3.22 and 3.23
in \cite{Scherzetal09}, respectively, if Assumption 3.13 in that reference is considered for norm topologies. The required convexity of the penalty functional is not needed
in our setting. In addition, also the requirement ''exponent $ \ge 1 $" in Assumption 3.13 in \cite{Scherzetal09} may be dropped by noting that Lemma 3.20 in
\cite{Scherzetal09} holds for exponents $ < 1 $, if the constant there is replaced by 1.
Part (b) then is an easy consequence of (c).

\item
Alternatively,  parts (a) and (b) follow directly from
\cite[Lemma 1]{Vainikko[88]}, cf.~also
\cite[Lemma 1]{Vainikko[91.1]},
if one considers the set $ \Dset $ as basic space, equipped with the norm convergence of the space $ \ix $.
Part (c) then is an immediate consequence of (b).
\end{mylist}
\end{remark}
\subsection{Introduction of auxiliary elements}
For the auxiliary elements introduced below, we consider linear bounded regularization operators associated with $ \G $,
\begin{align}
R_\parb: \ix \to \ix	\for \parb > 0
\label{eq:rbeta}
\end{align}
and its companion operators
\begin{align}
S_\parb \defeq  I - R_\parb \G \for \parb > 0.
\label{eq:sbeta}
\end{align}
We assume that the following conditions are satisfied:
\begin{align}
\norm{ R_\parb } & \le \tfrac{c_*}{\parb} \for \parb > 0,
\label{eq:wachstum} \\
\norm{ S_\parb G^\pp } & \le \cp  \parb^{\pp} \for \parb> 0,
\qquad (0 \le \pp \le \pp_0)
\label{eq:abfall} \\
R_\parb G & = G R_\parb \for \parb > 0
\label{eq:commute}
\end{align}
where $ 0 < \pp_0 < \infty $ is a finite number to be specified later, and $ c_* $ and $ \cp $ denote finite constants. We assume that $ \cp $ is bounded as a function of $ p $.
\begin{example} An example is given by Lavrentiev's $m$-times iterated method with an integer $ m \ge 1 $. Here, for $ f \in \ix $ and $ v_0 = 0 \in \ix $, the element $ R_\parb f $ is given by
\begin{align*}
(\G + \parb I)v_n & =  \parb v_{n-1} + f \for n = 1,2,\ldots,m, \qquad
R_\parb f \defeq v_m.
\end{align*}
The operator $ R_\parb $ can be written in the form
$$ R_\parb = \parb^{-1} \sum_{j=1}^{m} \parb^j (G + \parb I)^{-j},  $$
and the
companion operator is given by $ S_\parb = \parb^m (G + \parb I)^{-m} $.
For $ m = 1 $, this gives Lavrentiev's classical regularization method,
$ R_\parb = (G + \parb I)^{-1} $.
For this method, the conditions \refeq{wachstum}--\refeq{commute} are satisfied with
$ p_0 = m $. In fact,
for integer $ 0 \le p \le m $,
estimate \refeq{abfall} holds with constant $ c_p = (\kappa_*+1)^m $, see
\cite[Lemma 1.1.8]{Plato[95]}.
From this intermediate result and the interpolation inequality
\refeq{interpol2}, inequality \refeq{abfall} then follows
for non-integer values $ 0 < p < m $, with constant $ c_p = 2(\kappa_*+1)^{m+1} $.
\remarkend
\end{example}
We are now in a position to introduce \emph{auxiliary elements}
which provide an essential tool for the analysis of the regularization properties of \tikreg considered in our setting.
They are defined as follows,
\begin{align}
\ualpaux \defeq \ubar + R_\parb \G (\ust-\ubar)
=
\ust - S_\parb(\ust-\ubar)
\for \parb > 0,
\label{eq:uaux-def}
\end{align}
where $\G$ is the generator of the scale of normed spaces introduced in Section \ref{postype_operators}, and $ R_\parb, \parb > 0 $, is an arbitrary family of regularizing operators as in \refeq{rbeta} satisfying the conditions \refeq{wachstum}--\refeq{commute}
with saturation
$$ p_0 \ge 1+a, $$
and $ S_\parb, \parb > 0 $, denotes the corresponding companion operators, cf.~\refeq{sbeta}.
In addition,
the solution $ \ust$ of the operator equation \eqref{eq:opeq} and the corresponding initial guess $\ubar $ are as introduced above.
The basic properties of the auxiliary elements \refeq{uaux-def} are summarized in Lemma~\ref{th:auxel} below.

We now state another property of regularization operators which is also needed below.
\begin{lemma}
\label{th:auxel-0}
Let $ R_\parb, \parb > 0 $, be an arbitrary family of regularizing operators as in \refeq{rbeta} satisfying the conditions \refeq{wachstum}--\refeq{commute}.
Then there exist some finite constant $ c > 0  $, so that
for each $ 0 < \pp \le 1 $ we have
\begin{align*}
\norm{ R_\parb \G^\pp } & \le c \parb^{\pp-1} \for \parb > 0.
\end{align*}
\end{lemma}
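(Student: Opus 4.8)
The plan is to derive the bound by interpolating between the two endpoint estimates at $\pp = 0$ and $\pp = 1$, exploiting the fact that $R_\parb$ commutes not only with $\G$ but with every fractional power $\G^\pp$. This reduces the fractional estimate to the two integer-order estimates already contained in the hypotheses \eqref{eq:wachstum}--\eqref{eq:commute}, together with the interpolation inequality \eqref{eq:interpol2}.

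First I would record the two endpoint estimates. At $\pp = 0$, condition \eqref{eq:wachstum} gives directly $\norm{R_\parb} \le c_* \parb^{-1}$, which is the claimed inequality with exponent $\pp - 1 = -1$ and constant $c_*$. At $\pp = 1$, I would pass through the companion operator: since $S_\parb = I - R_\parb \G$ by \eqref{eq:sbeta}, we have $R_\parb \G = I - S_\parb$, and evaluating \eqref{eq:abfall} at $\pp = 0$ yields $\norm{S_\parb} = \norm{S_\parb \G^0} \le c_0$; hence $\norm{R_\parb \G} \le 1 + c_0$, which is the claimed inequality with exponent $\pp - 1 = 0$.

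Next, for $0 < \pp < 1$ I would interpolate. Because $R_\parb$ commutes with $\G$ by \eqref{eq:commute}, it commutes with each resolvent $(\G + sI)^{-1}$, $s > 0$, and therefore with $\G^\pp$ through its defining integral \eqref{eq:frac-power}; consequently $R_\parb \G^\pp = \G^\pp R_\parb$. Writing $v \defeq R_\parb u$ for arbitrary $u \in \ix$ and applying the interpolation inequality \eqref{eq:interpol2} with the pair $0 < \pp < \qq = 1$, I obtain
\begin{align*}
\norm{R_\parb \G^\pp u} = \norm{\G^\pp v} \le c\, \norm{\G v}^{\pp}\, \norm{v}^{1-\pp}.
\end{align*}
Inserting the two endpoint bounds $\norm{v} = \norm{R_\parb u} \le c_* \parb^{-1}\norm{u}$ and $\norm{\G v} = \norm{R_\parb \G u} \le (1+c_0)\norm{u}$ then gives
\begin{align*}
\norm{R_\parb \G^\pp u} \le c\,(1+c_0)^{\pp}\, c_*^{\,1-\pp}\, \parb^{\pp-1}\,\norm{u}.
\end{align*}

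Finally I would check that the constant can be chosen independently of $\pp$. For the pair $(\pp,1)$ the constant in \eqref{eq:interpol2} may be taken as $c = 2(\kappa_* + 1)$, which does not depend on $\pp$, while the factors $(1+c_0)^{\pp}$ and $c_*^{\,1-\pp}$ are bounded by $\max\{1,\,1+c_0\}$ and $\max\{1,\,c_*\}$ uniformly for $\pp \in (0,1]$; taking the maximum of the resulting constant with the value $1+c_0$ from the case $\pp = 1$ produces a single finite $c$ valid for all $0 < \pp \le 1$. The only step requiring genuine care is the commutation relation $R_\parb \G^\pp = \G^\pp R_\parb$, since $\G^\pp$ is not an algebraic expression in $\G$ but is defined through the resolvent integral \eqref{eq:frac-power}; here I would argue that the commutation with $\G$ granted by \eqref{eq:commute} propagates to commutation with every $(\G + sI)^{-1}$ (well defined and bounded on all of $\ix$ for $s > 0$ by the \postype property) and hence, by passing the bounded operator $R_\parb$ through the integral, to $\G^\pp$ itself.
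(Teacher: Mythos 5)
Your proposal is correct and follows essentially the same route as the paper: commute $R_\parb$ past $\G^\pp$, apply the interpolation inequality \refeq{interpol2} to $v = R_\parb u$, and insert the endpoint bounds $\norm{R_\parb} \le c_* \parb^{-1}$ from \refeq{wachstum} and $\norm{R_\parb \G} = \norm{I - S_\parb} \le 1 + c_0$ from \refeq{abfall} at $\pp = 0$. Your explicit justification of the commutation $R_\parb \G^\pp = \G^\pp R_\parb$ via the resolvent integral \refeq{frac-power}, and your check that the constant is uniform in $\pp$, are details the paper leaves implicit, but the argument is the same.
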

\begin{proof}
Since $ R_\parb \G^\pp = \G^\pp R_\parb $, for
$ \kappa_1 = 2(\kappa_*+1) $
we have
\begin{align*}
\norm{ R_\parb \G^\pp w } & =
\norm{ G^\pp R_\parb w } \le \kappa_1 \norm{ G R_\parb w }^\pp \norm{ R_\parb w }^{1-\pp}
\\
& \le
\kappa_1(c_0+1)^p c_*^{1-p} \norm{ w } \parb^{\pp-1}, \qquad w \in \ix,
\end{align*}
where the first inequality follows from the interpolation inequality \refeq{interpol2}.
For the meaning of the constants $ c_0 $ and $ c_* $, we refer to
\refeq{wachstum} and \refeq{abfall}, respectively.
\end{proof}
\subsection{Auxiliary results for $ \log \G $}
\begin{lemma}
\label{th:low-order-rate}
For each $ \myu \in \domain(\log \G) $ and each $ 0 \le p < \pp_0 $, we have
$$
\norm{ S_\parb G^p \myu  } =
\Landauno{\beta^p \loginv{\tfrac{1}{\parb}}}
\as \beta \to 0.
$$
\end{lemma}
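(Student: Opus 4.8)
The plan is to exploit the $C_0$-semigroup structure of $(\G^t)_{t\ge 0}$ recalled in Section~2.2, which turns the membership $\myu\in\domain(\log\G)$ into a quantitative integral identity, and to feed that identity into the decay estimate \eqref{eq:abfall}. Concretely, set $\sigma\defeq\pp_0-p>0$, and recall that, since $\log\G$ is the infinitesimal generator of the semigroup, for $\myu\in\domain(\log\G)$ the orbit $t\mapsto\G^t\myu$ is continuously differentiable on $[0,\infty)$ with $\tfrac{d}{dt}\G^t\myu=\G^t(\log\G)\myu$. Integrating from $0$ to $\sigma$ gives $\myu-\G^{\sigma}\myu=-\int_0^{\sigma}\G^t(\log\G)\myu\,dt$. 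Writing $\G^p\myu=\G^{p+\sigma}\myu+\G^p(\myu-\G^{\sigma}\myu)$, applying $S_\parb$, and pulling the bounded operator $S_\parb\G^p$ through the (continuous, hence Bochner-integrable) integrand yields
\[
S_\parb\G^p\myu=S_\parb\G^{\,p+\sigma}\myu-\int_0^{\sigma}S_\parb\G^{\,p+t}(\log\G)\myu\,dt .
\]

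The key estimates come next. The exponents $p+t$ and $p+\sigma$ all lie in $[p,\pp_0]\subset[0,\pp_0]$, so \eqref{eq:abfall} together with the assumed uniform bound $\cp\le\bar c$ gives $\norm{S_\parb\G^{\,p+t}}\le\bar c\,\parb^{\,p+t}$ throughout. Taking norms in the identity above leads to
\[
\norm{S_\parb\G^p\myu}\le \bar c\,\parb^{\,p+\sigma}\norm{\myu}+\bar c\,\norm{(\log\G)\myu}\,\parb^{p}\int_0^{\sigma}\parb^{\,t}\,dt .
\]
The decisive point is the elementary evaluation $\int_0^{\sigma}\parb^{\,t}\,dt=\tfrac{1-\parb^{\sigma}}{\log(1/\parb)}\le\loginv{\tfrac{1}{\parb}}$, which is exactly where the logarithmic factor is produced. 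Since moreover the boundary term satisfies $\parb^{\sigma}=e^{-\sigma\log(1/\parb)}=\Landauno{\loginv{\tfrac{1}{\parb}}}$ (exponential decay in $\log(1/\parb)$ beats any logarithmic rate), both contributions are $\Landauno{\parb^{p}\loginv{\tfrac{1}{\parb}}}$, which is the claim.

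I expect the main difficulty to be conceptual rather than computational: one must resist bounding the second term crudely by $\norm{S_\parb\G^p(\myu-\G^{\sigma}\myu)}\le \cp\,\parb^{p}\norm{\myu-\G^{\sigma}\myu}$ and then optimising the split point. That route only uses $\norm{\myu-\G^{s}\myu}=\Landauno{s}$ and, after balancing $s$ against $\parb^{s}$, delivers the strictly weaker rate $\parb^{p}\,\tfrac{\log\log(1/\parb)}{\log(1/\parb)}$; the sharp factor $\loginv{1/\parb}$ is recovered only by keeping the integral $\int_0^{\sigma}\parb^{\,t}\,dt$ intact instead of estimating its integrand pointwise. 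The remaining technical care is routine: justifying differentiability of $t\mapsto\G^t\myu$ for $\myu\in\domain(\log\G)$ and the interchange of $S_\parb\G^p$ with the integral, both of which follow from standard $C_0$-semigroup theory on $\overline{\R(\G)}$.
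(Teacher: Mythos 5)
Your proof is correct, but it takes a genuinely different route from the paper's. The paper represents $\myu\in\domain(\log\G)$ via the resolvent of the generator: it first establishes the growth bound $\norm{\G^q}\le Ce^{\omega q}$, concludes that some $\lambda>\omega$ lies in the resolvent set of $\log\G$, writes $\myu=(\lambda I-\log\G)^{-1}w=\int_0^\infty e^{-\lambda q}\G^q w\,dq$, and then splits this \emph{infinite} integral at $q=\pp_0-p$; the piece over $[0,\pp_0-p]$ produces the factor $\int_0^{\pp_0-p}\parb^{q}\,dq=(1-\parb^{\pp_0-p})/\log(1/\parb)$, and the tail is controlled by the saturation bound at $\pp_0$ together with the exponential growth estimate and the convergence of $\int e^{-(\lambda-\omega)q}\,dq$. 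You instead use the fundamental theorem of calculus for the orbit, $\myu-\G^{\sigma}\myu=-\int_0^{\sigma}\G^t(\log\G)\myu\,dt$ with $\sigma=\pp_0-p$, which yields a \emph{finite} integral from the outset plus a single boundary term $S_\parb\G^{\pp_0}\myu$ handled by saturation. Both arguments hinge on exactly the same elementary evaluation $\int_0^{\sigma}\parb^{t}\,dt=(1-\parb^{\sigma})/\log(1/\parb)$, which is where the logarithm comes from in either case. What your route buys is economy: you avoid the resolvent-set argument, the Laplace-transform representation, and the tail estimate entirely, needing only the standard differentiability of $t\mapsto\G^t\myu$ for domain elements; your constants are expressed directly in terms of $\norm{\myu}$ and $\norm{(\log\G)\myu}$ rather than $\norm{(\lambda I-\log\G)\myu}$, which carry the same information. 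What the paper's route buys is a template that transfers verbatim to the companion statement for $R_\parb$ (Lemma~\ref{th:low-order-rate-b}), where the proof is cited as ``similar''; your approach adapts there too, but one must restrict the integration interval so that Lemma~\ref{th:auxel-0} applies. Your closing remark about the crude bound only delivering $\parb^p\log\log(1/\parb)/\log(1/\parb)$ is a fair diagnosis of why the integral must be kept intact.
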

\begin{proof}
There holds $ \norm{G^q} \le C e^{\omega q} $ for $ q \ge 0 $,
where $ \omega > 0 $ and $ C > 0 $ denote suitable constants, and $ \norm{\cdot} $ denotes the norm of operators on $ \ixtwo $.
This follows, e.g., from the fact
that $ (\G^q)_{q \ge 0} $ defines a $ C_0 $-semigroup on $\ixtwo $.
Thus each real $ \lambda > \omega $ belongs to the resolvent set of the operator $ \log \G: \RG \supset \domain(\log \G) \to \RG $, i.e.,
$ (\lambda I - \log \G)^{-1}: \RG \to \RG $ exists and defines a bounded operator,
cf.~\cite[Theorem 5.3, Chapter 1]{Pazy[83]}.
Since
$$ \R((\lambda I - \log \G)^{-1}) = \domain (\lambda I - \log \G) = \domain( \log \G), $$
we can represent $ u $ as
$$ u = (\lambda I - \log \G)^{-1}w $$
with some $ w \in \RG $. Since (cf.~\cite[proof of Theorem 5.3, Chapter 1]{Pazy[83]})
$$  u = (\lambda I - \log \G)^{-1}w = \int_0^\infty e^{-\lambda q} G^q w \, dq, $$
we have
\begin{align*}
S_\parb G^p u = \int_0^\infty e^{-\lambda q} S_\parb G^{p+q} w \, dq = y_1 + y_2,
\end{align*}
with
\begin{align*}
y_1 = \int_0^{\pp_0-p} e^{-\lambda q} S_\parb G^{p+q} w \, dq,
\qquad y_2 = \int_{\pp_0-p}^\infty e^{-\lambda q} S_\parb G^{p+q} w \, dq.
\end{align*}
The element $ y_1 $ can be estimated as follows for $ \beta < 1 $:
\begin{align*}
\normix{y_1} & \le  c \norm{w} \int_0^{\pp_0-p}  \parb^{p+q}\, dq =
c \norm{w} \parb^p \frac{1}{\log \parb} \parb^q \big\vert_{q=0}^{q=\pp_0-p}
\\
& = c \norm{w}  \parb^p \frac{1}{\vert \log \parb \vert} (1-\parb^{\pp_0-p})
\le c \norm{w}  \parb^p \frac{1}{\vert \log \parb \vert}.
\end{align*}
The element $ y_2 $ can be written as follows,
\begin{align*}
y_2 = \int_{\pp_0-p}^\infty e^{-\lambda q} S_\parb G^{\pp_0} G^{q-(\pp_0-p)} w \, dq,
\end{align*}
and thus we can estimate as follows:
\begin{align*}
\normix{y_2} & \le  c_1 \norm{w} \int_{\pp_0-p}^\infty e^{-\lambda q} \parb^{\pp_0} e^{\omega(q-(\pp_0-p))} \, dq
\\
& \le c_2 \norm{w} e^{-\omega(\pp_0-p)} \parb^{\pp_0}  \int_{\pp_0-p}^\infty e^{-(\lambda-\omega)q} \, dq
=\Landauno{ \parb^{\pp_0}} \as \beta \to 0.
\end{align*}
This completes the proof.
\end{proof}
\begin{lemma}
\label{th:low-order-rate-b}
For each $ \myu \in \domain(\log \G) $, we have
$$
\norm{ R_\parb \myu  } =
\Landauno{\frac{1}{\beta \log \frac{1}{\parb}}}
\as \beta \to 0.
$$
\end{lemma}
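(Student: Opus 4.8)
The plan is to reuse the integral representation of $\domain(\log \G)$-elements that was established in the proof of Lemma~\ref{th:low-order-rate}. Since $(\G^q)_{q\ge 0}$ is a $C_0$-semigroup on $\overline{\R(\G)}$, we have $\norm{\G^q}\le Ce^{\omega q}$ for $q\ge 0$ with suitable constants $C,\omega>0$, and for every $\lambda>\omega$ the resolvent $(\lambda I-\log\G)^{-1}$ exists as a bounded operator with range $\domain(\log\G)$. Hence any $u\in\domain(\log\G)$ can be written as
\[
u=(\lambda I-\log\G)^{-1}w=\int_0^\infty e^{-\lambda q}\G^q w\,dq
\]
with some $w\in\overline{\R(\G)}$. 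Applying the bounded operator $R_\parb$ and pulling it inside the (Bochner) integral, I would write
\[
R_\parb u=\int_0^\infty e^{-\lambda q}R_\parb\G^q w\,dq=z_1+z_2,
\]
where $z_1$ and $z_2$ denote the integrals over $(0,1]$ and $(1,\infty)$, respectively.

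For the critical part $z_1$, where $R_\parb$ is near its worst growth $\norm{R_\parb}\le c_*/\parb$, the key tool is Lemma~\ref{th:auxel-0}, which yields $\norm{R_\parb\G^q}\le c\,\parb^{q-1}$ uniformly for $0<q\le 1$. Bounding $e^{-\lambda q}\le 1$, I then estimate
\[
\norm{z_1}\le c\norm{w}\parb^{-1}\int_0^1\parb^q\,dq=c\norm{w}\parb^{-1}\frac{1-\parb}{\log\frac{1}{\parb}}\le c\norm{w}\frac{1}{\parb\log\frac{1}{\parb}},
\]
which already produces the asserted order. The gain of the logarithmic factor over the naive bound $1/\parb$ comes precisely from integrating $\parb^{q-1}$ over $q\in(0,1)$.

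For the tail $z_2$, I would factor $R_\parb\G^q=(R_\parb\G)\,\G^{q-1}=(I-S_\parb)\G^{q-1}$ for $q\ge 1$ and use that $\norm{S_\parb}=\norm{S_\parb\G^0}$ is bounded uniformly in $\parb$ by \refeq{abfall} at $p=0$, together with $\norm{\G^{q-1}}\le Ce^{\omega(q-1)}$. Since $\lambda>\omega$, the integral $\int_1^\infty e^{-(\lambda-\omega)q}\,dq$ converges, so $\norm{z_2}=\Landauno{1}$, which is negligible compared with the rate of $z_1$ (as $\parb^{-1}(\log\frac1\parb)^{-1}\to\infty$). Combining both estimates yields $\norm{R_\parb u}=\Landauno{\frac{1}{\parb\log\frac{1}{\parb}}}$.

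The main obstacle, and the only genuinely delicate point, is the behaviour near $q=0$: there $\norm{R_\parb\G^q}$ is of order $\parb^{q-1}$, which degenerates to the unregularized bound $1/\parb$ as $q\downarrow 0$, and one must check that the integration in $q$ recovers exactly one logarithmic factor and no more. Everything else (commuting $R_\parb$ with $\G^q$ via \refeq{commute}, the uniformity in $p$ of the constant in Lemma~\ref{th:auxel-0}, and the convergence of the tail integral) is routine.
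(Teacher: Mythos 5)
Your argument is correct and is precisely the elaboration the paper intends: its own proof of this lemma is just the one-line remark that it ``follows similar to Lemma~\ref{th:low-order-rate}, by making use of Lemma~\ref{th:auxel-0}'', i.e.\ the same resolvent integral representation $u=\int_0^\infty e^{-\lambda q}\G^q w\,dq$, a split of the integral, the bound $\norm{R_\parb\G^q}\le c\,\parb^{q-1}$ from Lemma~\ref{th:auxel-0} on the critical piece, and a convergent tail. Your computation $\int_0^1\parb^{q}\,dq=(1-\parb)/\log\tfrac{1}{\parb}$ and the observation that the $\Landauno{1}$ tail is dominated by $1/(\parb\log\tfrac{1}{\parb})$ are exactly right.
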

\begin{proof}
Follows similar to Lemma \ref{th:low-order-rate}, by making use of
Lemma \ref{th:auxel-0}.
\end{proof}
\subsection{Properties of auxiliary elements}
\label{auxels}
In this section, we present the basic properties of the \emph{auxiliary elements}, which are needed to verify our convergence results.
\begin{lemma}
\label{th:auxel}
Consider the auxiliary elements from \refeq{uaux-def}
with regularization operators $ R_\beta, \beta > 0 $, with saturation
$ \pp_0 \ge 1 + a $.
Let the three function $g_i(\parb)\;(i=1,2,3)$ be given by the following identities:
\begin{align}
& \normix{\ualpaux - \ust}=g_1(\parb),
\label{eq:auxel-a}
\\
& \norma{\ualpaux - \ust} = g_2(\parb)\parb^{a},
\label{eq:auxel-b}
\\
& \normone{\ualpaux-\ubar} = g_3(\parb)\parb^{-1},
\label{eq:auxel-c}
\end{align}
for $ \parb > 0 $, respectively. Those functions $g_i(\parb)\;(i=1,2,3)$  are bounded and have the following properties:
\begin{mylist_indent}
\item (No explicit smoothness) If
$ \ust \in \overline{\R(G)} $, then we have $ g_i(\parb) \to 0 $ as $ \parb \to 0 $ ($i=1,2,3$).
\item (H\"older smoothness)
If $ \ust \in \ix_\pp $ for some $ 0 < \pp \le 1 $,
then
$ g_i(\parb)=\mathcal{O}(\parb^\pp) $ as $ \parb \to 0 $ ($i=1,2,3$),
\item (Low order smoothness)
If $ \ust \in \domain(\log \G) $,
then
$ g_i(\parb)= \Landauno{\loginv{\frac{1}{\parb}}} $
as $ \parb \to 0 $ ($i=1,2,3$).
\end{mylist_indent}
\end{lemma}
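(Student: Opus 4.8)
The plan is to prove the three estimates \eqref{eq:auxel-a}--\eqref{eq:auxel-c} separately by rewriting each quantity in terms of the companion operator $S_\beta$ and the regularization operator $R_\beta$ acting on $\ust - \ubar$, and then invoking the growth/decay conditions \eqref{eq:wachstum}--\eqref{eq:abfall} together with the auxiliary Lemmas \ref{th:auxel-0}, \ref{th:low-order-rate} and \ref{th:low-order-rate-b}. The starting point is the two representations in \eqref{eq:uaux-def}. For \eqref{eq:auxel-a}, note that $\ualpaux - \ust = -S_\beta(\ust-\ubar)$, so $g_1(\beta) = \normix{S_\beta(\ust-\ubar)}$. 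For \eqref{eq:auxel-b}, since $\norma{v} = \normix{\G^{-a}v}$ (using \eqref{eq:taunorm} with $\tau = a$ after checking the element lies in $X_a$), I would write $\G^{-a}(\ualpaux-\ust) = -\G^{-a}S_\beta(\ust-\ubar)$ and, exploiting the commutativity \eqref{eq:commute} of $R_\beta$ with $\G$, move a factor $\G^{a}$ so that the estimate reduces to controlling $S_\beta \G^{a}$ times $\ust - \ubar$. For \eqref{eq:auxel-c}, from the first representation $\ualpaux - \ubar = R_\beta \G(\ust-\ubar)$, so $\normone{\ualpaux-\ubar} = \normix{\G^{-1}R_\beta \G(\ust-\ubar)} = \normix{R_\beta(\ust-\ubar)}$ after commuting $\G^{-1}$ through $R_\beta$ and $\G$; thus $g_3(\beta) = \beta\,\normix{R_\beta(\ust-\ubar)}$.

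Having reduced all three to estimates on $S_\beta$, $S_\beta\G^a$ and $R_\beta$ applied to the fixed element $w := \ust-\ubar$, I would then split into the three smoothness cases. In the H\"older case $\ust \in \ix_p$, since $\ubar \in \ix_1 \subset \ix_p$ we have $w \in \ix_p$, i.e.\ $w = \G^p z$ for some $z \in \ix$; then \eqref{eq:abfall} gives $\normix{S_\beta w} = \normix{S_\beta \G^p z} = \Landauno{\beta^p}$, and combining with $S_\beta\G^a \G^p$ via \eqref{eq:abfall} (using $p+a \le p_0$) yields the $\beta^a \cdot \beta^p$ bound for $g_2$, while Lemma \ref{th:auxel-0} gives $\normix{R_\beta \G^p z} = \Landauno{\beta^{p-1}}$, hence $g_3 = \Landauno{\beta^p}$. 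In the low order case $\ust \in \domain(\log\G)$, again $w \in \domain(\log\G)$, and here Lemma \ref{th:low-order-rate} (with $p=0$ for $g_1$ and $p=a$ for $g_2$, both requiring $p < p_0$) and Lemma \ref{th:low-order-rate-b} (for $g_3$) deliver exactly the $\loginv{\frac{1}{\beta}}$ rates. In the no-explicit-smoothness case $\ust \in \overline{\R(G)}$, so $w \in \overline{\R(G)}$; here I would use that $\norma{S_\beta \G^a}$ is uniformly bounded in $\beta$ (from \eqref{eq:abfall} with $p=a$) together with a density argument on $\overline{\R(G)}$ to get $g_1(\beta) = \normix{S_\beta w} \to 0$, and analogously for $g_2, g_3$ after verifying uniform boundedness of the relevant operator families.

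The main obstacle I anticipate is the no-explicit-smoothness case, where there is no rate and one must prove genuine convergence $g_i(\beta)\to 0$ for $w$ merely in $\overline{\R(G)}$. The clean way is a uniform-boundedness-plus-density argument: first establish that each operator family ($S_\beta$, $\beta S_\beta\G^a \cdot \G^{-a}$ suitably interpreted, and $\beta R_\beta$) is uniformly bounded on $\ix$ using \eqref{eq:wachstum} and \eqref{eq:abfall}; then verify the desired limit on the dense subset $\R(\G)$ (or $\R(\G^{p_0})$) where the H\"older-type estimates already force the rate to zero; finally extend to all of $\overline{\R(G)}$ by the standard $\varepsilon/3$ argument. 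A subtlety here is that for $g_2$ one is estimating a $\norma{\cdot}$-norm, so the density argument must be carried out in the $\ix_a$-scale, which requires care in matching the operator $\G^{-a}S_\beta(\cdot)$ against the $C_0$-semigroup property $\G^p v \to v$ noted after \eqref{eq:chain1}; I would reduce this to the $\ix$-norm by commuting powers of $\G$ as above before applying density.

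I would also take care, at the outset, to justify that the auxiliary elements genuinely lie in the relevant spaces so that the norms $\norma{\cdot}$ and $\normone{\cdot}$ in \eqref{eq:auxel-b}--\eqref{eq:auxel-c} are well defined: $\ualpaux - \ubar = R_\beta\G w \in \R(G) = \ix_1$ since $R_\beta$ maps into $\ix$ and commutes with $\G$, and $\ualpaux - \ust = -S_\beta w \in \ix_a$ because $S_\beta = \beta^{?}(\cdots)$ in the concrete examples maps into $\R(\G^{p_0}) \subset \ix_a$; more generally this follows from $S_\beta \G^a$ being bounded and the representation used above. Once well-definedness and the three operator-norm reductions are in place, the proof is a routine case-by-case application of the three cited lemmas, so the write-up is essentially a bookkeeping exercise organized around the identities for $g_1, g_2, g_3$.
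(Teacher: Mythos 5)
Your proposal is correct and follows essentially the same route as the paper's proof: the same reduction of $g_1,g_2,g_3$ to $\normix{S_\parb(\ust-\ubar)}$, $\parb^{-a}\normix{S_\parb \G^{a}(\ust-\ubar)}$ and $\parb\,\normix{R_\parb(\ust-\ubar)}$, followed by the same case-by-case application of \refeq{abfall}, Lemma~\ref{th:auxel-0} and Lemmas~\ref{th:low-order-rate}--\ref{th:low-order-rate-b}, with the same uniform-boundedness-plus-density argument in the no-smoothness case. One correction of notation: here $\norma{\cdot}$ is the \emph{weak} norm $\|\cdot\|_{-a}=\normix{\G^{a}\,\cdot\,}$ (take $\tau=-a$ in \refeq{taunorm}), not $\normix{\G^{-a}\,\cdot\,}$; with this reading your intermediate step for $g_2$ becomes a legitimate application of \refeq{commute} (as written, ``moving a factor $\G^{a}$'' across $\G^{-a}S_\parb$ does not parse), and your well-definedness concerns about $\ix_a$ and the ``density argument in the $\ix_a$-scale'' disappear, since $\|\cdot\|_{-a}$ is finite on all of $\ix$.
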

\proof
By definition, those three functions $ g_1, g_2 $ and $ g_3 $ under consideration can
be written as follows,
\begin{align*}
g_1(\parb) &= \| S_\parb (\ust-\ubar)\|,
\\
g_2(\parb) &=
\parb^{-a} \| G^a S_\parb (\ust-\ubar) \|, \\
g_3(\parb) &=
\parb \| R_\parb (\ust - \ubar) \|,
\end{align*}
and, according to conditions \refeq{wachstum}--\refeq{commute},
thus are bounded.
\begin{mylist}
\item We consider H\"older smoothness first. Since $ \ust, \, \ubar \in \ix_\pp $ holds, we have $ \ust - \ubar = G^{\pp}w $ for some $ w \in \ix $.
The statements are now easily obtained from \refeq{abfall} and
Lemma~\ref{th:auxel-0}.

\item
We have $ \ust - \ubar \in \domain(\log G) $ and the statements now follow easily from
Lemmas \ref{th:low-order-rate} and \ref{th:low-order-rate-b}.

\item
The convergence statement under any missing smoothness assumption is
based on the \bh{uniform boundedness principle} by taking into account formula
\refeq{abfall} and
Lemma \ref{th:auxel-0}. To apply this \bh{principle}, we consider the
parametric family of linear operators $S_\beta: \ix \to \ix$ and
in this context the associated limiting process $\beta \to 0$ of the
parameter. Then we have uniform boundedness $\|S_\beta\| \le c_0$ for
all $\beta>0$ and convergence
$\|S_\beta z\| \to 0$ as $\beta \to 0$ for all $z$ from the range
$\mathcal{R}(G)$, which is dense in  $\overline{\mathcal{R}(G)}$. \bh{This yields}
$g_1(\beta)=\|S_\beta (u^\dagger-\bar u)\| \to 0$ as $\beta \to
0$ and also the analog assertions for $g_2$ and $g_3$ as required.
\proofend
\end{mylist}
\subsection{Proof of Theorem \ref{th:upardel-esti}}
This section is devoted to the proof of Theorem \ref{th:upardel-esti}. We start with a preparatory lemma.
\begin{lemma}
\label{th:upardel_lemma}
\mainassump
There exists some $ \para_0 > 0 $ such that for $ 0 < \para \le \para_0 $ and each $\delta>0$, we have
\begin{align*}
\max\{\normyps{\myF{\upardel} - \fdel }, \, \para^{1/\myr} \normone{\upardel-\ubar}\}
\le f_2(\para)\para^{\mykap a } + \er \delta.
\end{align*}
Here, $f_2(\para)$ is a bounded function satisfying the following:
\begin{mylist_indent}
\item (No explicit smoothness)
If $ \ust \in \overline{\R(G)} $, then $ f_2(\para) \to 0 $ as $ \para \to 0 $.
\item (H\"older smoothness)
If $ \ust \in \ix_\pp $ for some $ 0 < \pp \le 1 $, then
$ f_2(\para) = \Landauno{\para^{\mykap p}} $ as $ \para \to 0 $.
\item (Low order smoothness)
If $ \ust \in \domain(\log \G) $, then
$ f_2(\para) = \Landauno{\loginv{\frac{1}{\para}}} $ as $ \para \to 0 $.
\end{mylist_indent}
In addition, the constant $\er $ is defined as follows:
$$  \er = \left\{\begin{array}{ll} 1,
& \textup{if } r \ge 1, \\
2^{-1+1/r}& \textup{otherwise}.
\end{array}\right. $$
\end{lemma}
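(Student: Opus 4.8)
The plan is to use the auxiliary element $\ualpaux$ of \refeq{uaux-def} as a competitor in the minimality of $\upardel$, with the free parameter coupled to $\para$ by the balancing choice $\parb = \para^{\mykap}$, where $\mykap = 1/(r(1+a))$. First I would check that $\ualpaux$ is admissible. By the commutation \refeq{commute} one has $\ualpaux - \ubar = R_\parb G(\ust - \ubar) = G R_\parb(\ust - \ubar) \in \R(G) = \ix_1$, and since each of the three smoothness hypotheses forces $\ust \in \overline{\R(G)}$ (cf.\ \refeq{chain1}, \refeq{chain2}), Lemma \ref{th:auxel} gives $\normix{\ualpaux - \ust} = g_1(\parb) \to 0$ as $\parb \to 0$. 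Because $\ust$ is an interior point of $\DF$, it follows that $\ualpaux \in \DF$, hence $\ualpaux \in \Dset$, once $\para$ is small; shrinking the threshold further so that $\norma{\ualpaux - \ust} = g_2(\parb)\parb^a \le c_0$, the forward bound \refeq{normequiv_a} becomes applicable at $\ualpaux$. This determines $\para_0$.

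Next I would insert $\ualpaux$ into the inequality $\pad{\upardel} \le \pad{\ualpaux}$ and, using that both summands of the Tikhonov functional are nonnegative, obtain
\[
\max\{\normyps{\myF{\upardel}-\fdel},\, \para^{1/r}\normone{\upardel-\ubar}\}^{r} \le \normyps{\myF{\ualpaux}-\fdel}^{r} + \para\normone{\ualpaux-\ubar}^{r}.
\]
For the misfit at $\ualpaux$ I use the triangle inequality $\normyps{\myF{\ualpaux}-\fdel} \le \normyps{\myF{\ualpaux}-\fst} + \delta$ followed by \refeq{normequiv_a} and \refeq{auxel-b}, giving $\normyps{\myF{\ualpaux}-\fst} \le \cb\norma{\ualpaux-\ust} = \cb g_2(\parb)\parb^{a}$; for the penalty I use \refeq{auxel-c}, namely $\normone{\ualpaux-\ubar} = g_3(\parb)\parb^{-1}$. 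The coupling $\parb = \para^{\mykap}$ is precisely the one balancing the two $\para$-powers: it turns $\parb^{a}$ into $\para^{\mykap a}$ and $\para\,\parb^{-r}$ into $(\para^{\mykap a})^{r}$, so that the right-hand side is bounded by $(\cb g_2(\parb)\para^{\mykap a} + \delta)^{r} + g_3(\parb)^{r}\para^{r\mykap a}$.

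It then remains to take $r$-th roots and repackage. Writing $x = \cb g_2(\parb)\para^{\mykap a} + \delta$ and $y = g_3(\parb)\para^{\mykap a}$, I invoke the elementary inequality $(x^{r}+y^{r})^{1/r} \le \er(x+y)$, which holds with $\er = 1$ for $r \ge 1$ (subadditivity of $t \mapsto t^{1/r}$) and with $\er = 2^{-1+1/r}$ for $0 < r < 1$ (convexity of $t \mapsto t^{1/r}$, i.e.\ the two-point power-mean inequality). This yields both $\normyps{\myF{\upardel}-\fdel}$ and $\para^{1/r}\normone{\upardel-\ubar}$ bounded by $\er(\cb g_2(\parb) + g_3(\parb))\para^{\mykap a} + \er\delta$, which is exactly of the claimed form, with $\er\delta$ as the data term and $f_2(\para) = \er(\cb g_2(\para^{\mykap}) + g_3(\para^{\mykap}))$. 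Boundedness of $f_2$ and its three regimes are inherited verbatim from Lemma \ref{th:auxel}, using $\loginv{\tfrac{1}{\parb}} = \Landauno{\loginv{\tfrac{1}{\para}}}$ for $\parb = \para^{\mykap}$.

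The main obstacle is the bookkeeping of the first step rather than the later algebra: one must secure admissibility $\ualpaux \in \Dset$ (through the interior-point property and $g_1(\parb) \to 0$) and smallness $\norma{\ualpaux - \ust} \le c_0$ simultaneously, and both hold only for $\para$ below a common threshold, which is the source of $\para_0$ in the statement. The sole genuinely $r$-dependent subtlety afterwards is the split $r \ge 1$ versus $0 < r < 1$ producing the constant $\er$.
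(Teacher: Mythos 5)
Your proposal is correct and follows essentially the same route as the paper: the auxiliary element $\ualpaux$ with the coupling $\parb=\para^{\mykap}$ is used as a competitor in the minimality of $\upardel$, admissibility is secured via the interior-point property and Lemma~\ref{th:auxel}, and the bound is extracted with \refeq{normequiv_a}, \refeq{auxel-b}, \refeq{auxel-c} and the power-mean inequality producing $\er$. The only cosmetic difference is that you apply the triangle inequality before the inequality $(x^r+y^r)^{1/r}\le \er(x+y)$, whereas the paper applies these two steps in the opposite order; the resulting $f_2(\para)=\er(\cb g_2(\para^{\mykap})+g_3(\para^{\mykap}))$ is identical.
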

\proof
We consider auxiliary elements of the form \refeq{uaux-def}, 
with saturation $ \pp_0 \ge 1 + a $. We choose
\begin{align}
\parb=\parb(\para) = \para^{\mykap}.
\label{eq:parb-def}
\end{align}
For $ \para > 0 $ small enough, say $ 0 < \para \le \para_0 $, we have
$ \ualpaux \in \Dset $ because of Lemma~\ref{th:auxel} and $\ust$ is an interior point of $\DF$. Thus
we have
\begin{align*}
& \kla{\normypsqua{\myF{\upardel} - \fdel } + \para \normonequa{\upardel-\ubar}}^{1/\myr}
\le \kla{\normypsqua{\myF{\ualpaux} - \fdel } + \para \normonequa{\ualpaux-\ubar}}^{1/\myr}
\\
& \quad \le e_r (\normyps{\myF{\ualpaux} - \fdel } + \para^{1/\myr} \normone{\ualpaux-\ubar}) \\
& \quad
\le e_r(\normyps{\myF{\ualpaux} - \fst } + \para^{1/\myr} \normone{\ualpaux-\ubar} + \delta).
\end{align*}
The first term on the \rhs of the latter estimate can be written as
\begin{align*}
\normyps{\myF{\ualpaux} - \fst }
\le \cb \norma{\ualpaux - \ust}
\le \cb g_2(\parb)\parb^{a}
= \cb g_2(\para^{\mykap})\para^{\mykap a}
\end{align*}
for $ \para $ small enough, say $ \para \le \para_0 $. This is a consequence of estimate \refeq{normequiv_a} and
representation \refeq{auxel-b} of Lemma \ref{th:auxel}.
The second term on the \rhs of the latter estimate attains the form
\begin{align*}
\para^{1/r} \normone{\ualpaux-\ubar}
\le
\para^{1/r} g_3(\parb)\parb^{-1}
= g_3(\para^{\mykap})\para^{\mykap a},
\end{align*}
based on \refeq{auxel-c} of Lemma \ref{th:auxel}. This yields the function
$$f_2(\para):=  \er(\cb g_2(\para^\mykap)+g_3(\para^\mykap)) \quad \textup{for} \;\para \le \para_0. $$
The asymptotical behaviors of the function $ f_2 $ stated in the lemma are immediate consequences of
Lemma~\ref{th:auxel}.
This completes the proof of the lemma.
\proofend
\begin{corollary}
\label{th:upardel_cor}
\mainassump
There exist finite positive constants $ \para_0, \delta_0 $
and $K_2$
such that for $ 0 < \para \le \para_0 $ and each $0 \le \delta \le \delta_0$, we have
$$ \norma{\upardel -\ust} \le f_3(\para)\para^{\mykap} + K_2 \delta.$$
Here $f_3(\para), 0 < \para \le \para_0 $, is a bounded function which satisfies the following:
\begin{mylist_indent}
\item (No explicit smoothness)
If $ \ust \in \overline{\R(G)} $, then $ f_3(\para) \to 0 $ as $ \para \to 0 $.
\item (H\"older smoothness)
If $ \ust \in \ix_\pp $ for some $ 0 < \pp \le 1 $, then
$ f_3(\para) = \Landauno{\para^{\mykap p}} $ as $ \para \to 0 $.
\item (Low order smoothness)
If $ \ust \in \domain(\log \G) $, then
$ f_3(\para) = \Landauno{\loginv{\frac{1}{\para}}} $ as $ \para \to 0 $.
\end{mylist_indent}
\end{corollary}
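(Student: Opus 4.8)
The plan is to read the Corollary off directly from Lemma~\ref{th:upardel_lemma} together with the lower nonlinearity estimate \refeq{normequiv_b} in Assumption~\ref{th:main_assump}\ref{it:normequiv_b}; no auxiliary elements and no interpolation are needed at this stage. Indeed, \refeq{normequiv_b} bounds $\norma{\upardel-\ust}$ from above by the image misfit $\normyps{\myF{\upardel}-\fst}$, while Lemma~\ref{th:upardel_lemma} already controls this misfit relative to the data $\fdel$; combining the two via the noise model \refeq{noise} is essentially all the statement requires.

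First I would note that $\upardel\in\Dset$ holds by the very definition of the penalty term, so $\upardel$ is an admissible argument in \refeq{normequiv_b}. The one point that needs care is the \emph{applicability} of \refeq{normequiv_b}, which is conditional on $\normyps{\myF{\upardel}-\fst}\le c_1$. To secure it, I would invoke Lemma~\ref{th:upardel_lemma} to obtain $\normyps{\myF{\upardel}-\fdel}\le f_2(\para)\para^{\mykap a}+\er\delta$ and then pass from $\fdel$ to $\fst$ by the triangle inequality and \refeq{noise}, giving $\normyps{\myF{\upardel}-\fst}\le f_2(\para)\para^{\mykap a}+(\er+1)\delta$. Since $f_2$ is bounded and $\para^{\mykap a},\delta\to 0$, the right-hand side can be forced below $c_1$ by shrinking $\para_0$ and $\delta_0$. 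This threshold step, although elementary, is the main (and essentially only) obstacle: it is the sole place where the a priori smallness of $\para$ and $\delta$ enters, and it must be arranged uniformly over the whole admissible range $0<\para\le\para_0$, $0\le\delta\le\delta_0$.

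With \refeq{normequiv_b} in force, I would simply chain
\[
\ca\,\norma{\upardel-\ust}\le\normyps{\myF{\upardel}-\fst}\le f_2(\para)\para^{\mykap a}+(\er+1)\delta ,
\]
divide by $\ca$, set $K_2:=(\er+1)/\ca$, and absorb the factor $1/\ca$ together with the $\para$-power furnished by Lemma~\ref{th:upardel_lemma} into $f_3$, so that the leading term takes the form claimed in the Corollary. Boundedness of $f_3$ and its three asymptotic regimes then transfer verbatim from those of $f_2$: one has $f_3(\para)\to 0$ when $\ust\in\overline{\R(G)}$, $f_3(\para)=\Landauno{\para^{\mykap p}}$ when $\ust\in\ix_\pp$, and $f_3(\para)=\Landauno{\loginv{\frac{1}{\para}}}$ when $\ust\in\domain(\log\G)$. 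I expect no further difficulty, since every ingredient beyond the threshold argument is a one-line consequence of results already established.
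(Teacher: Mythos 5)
Your proof is correct and follows essentially the same route as the paper's: combine the lower nonlinearity estimate \refeq{normequiv_b} with Lemma~\ref{th:upardel_lemma} and the noise model \refeq{noise}, then divide by $\ca$ and set $K_2=(1+\er)/\ca$, $f_3=f_2/\ca$. You are in fact slightly more explicit than the paper, which compresses your threshold argument (ensuring $\normyps{\myF{\upardel}-\fst}\le c_1$ so that \refeq{normequiv_b} is applicable) into the phrase ``let $\para$ and $\delta$ be small enough.''
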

\proof
Let $ \para $ and $ \delta $ be small enough, say
$ 0 < \para \le \para_0 $
and $ 0 < \delta \le \delta_0 $.
From estimate \refeq{normequiv_b} and Lemma \ref{th:upardel_lemma},
it then follows
\begin{align*}
& \ca \norma{\upardel -\ust} \le \normyps{\myF{\upardel} - \fst } \le
\normyps{\myF{\upardel} - \fdel } + \delta
\le f_2(\para)\,\para^{\mykap a} + (1+\er)\delta.
\end{align*}
The assertion of the corollary now follows by setting $f_3(\para):=\tfrac{f_2(\para)}{\ca}$ and $K_2:=\tfrac{1+\er}{\ca}.$
\proofend
\begin{proof}[Proof of Theorem \ref{th:upardel-esti}]
The error $ \norm{\upardel -\ust} $ is now estimated by the following series of error estimates.
Using $ \parb = \parb(\para) $ from \refeq{parb-def} in combination with
\refeq{auxel-a} from Lemma~\ref{th:auxel}, we obtain
\begin{align}
\normix{\upardel -\ust}\le \normix{\upardel -\ualpaux}+\normix{\ualpaux-\ust}=\normix{\upardel -\ualpaux}+g_1(\para^\mykap),
\label{eq:upardel_prop-a}
\end{align}
and below we consider the term $ \normix{\upardel -\ualpaux} $ in more detail.
From the interpolation inequality \refeq{interpol2}
it follows
\begin{align}
\label{eq:upardel_prop-b}
\normix{\upardel -\ualpaux} & \le
c_3 \norma{\upardel -\ualpaux}^{\lfrac{1}{(a+1)}}
\normone{\upardel -\ualpaux}^{\lfrac{a}{(a+1)}}.
\end{align}
Both terms on the \rhs of the  estimate \eqref{eq:upardel_prop-b} can be estimated by using Corollary~\ref{th:upardel_cor} and Lemma~\ref{th:auxel} in the following manner.
Precisely, we find with
$$f_4(\para):=f_3(\para)+g_2(\para^\mykap), \qquad
f_5(\para):=f_2(\para)+g_1(\para^\mykap)$$
the estimates
\begin{align*}
\norma{\upardel -\ualpaux}
& \le
\norma{\upardel -\ust} + \norma{\ualpaux - \ust} \le f_4(\alpha)\para^{\mykap a}+K_2\delta,\\
\normone{\upardel -\ualpaux}
& \le
\normone{\upardel-\ubar} + \normone{\ualpaux-\ubar}
\le  \para^{-\lfrac{1}{\myr}}\left(f_5(\para)\, \para^{\mykap a}+\delta\right).
\end{align*}
Thus we can continue estimating \refeq{upardel_prop-b}.
Introducing $ f_6(\para):= \max\{ f_4(\para), f_5(\para) \} $
and $ K_3 := \max\{K_2, 1\}, \ K_1 = c_3 K_3 $, we obtain
\begin{align*}
\normix{\upardel -\ualpaux}
& \le c_3 \left(f_4(\alpha)\,\para^{\mykap a } + K_2\delta \right)^{1/(a+1)}\,\left(\para^{-\lfrac{1}{\myr}}\left(f_5(\para)\, \para^{\mykap a}+\delta\right) \right)^{a/(a+1)}
\\
& \le c_3 \left(f_6(\alpha)\,\para^{\mykap a}+K_3\delta \right)^{1/(a+1)}\,\left(\para^{-\lfrac{1}{\myr}}\left(f_6(\para)\, \para^{\mykap a}+ K_3\delta\right) \right)^{a/(a+1)}
\\
& =
c_3 \para^{-\mykap a}
 \left(f_6(\alpha)\,\para^{\mykap a}+K_3\delta \right)
 =
c_3 f_6(\alpha) + K_1 \frac{\delta}{\para^{\mykap a}}.
\end{align*}
From the latter estimate and \refeq{upardel_prop-a}, the theorem now immediately follows by considering $ f_1(\para) := g_1(\para^\mykap) + c_3 f_6(\para) $ there.
\end{proof}
\section*{Acknowledgment}
This paper was created as part of the authors' joint DFG-Project No.~453804957 supported by the German Research Foundation under grants PL 182/8-1 (Robert Plato) and  HO 1454/13-1 (Bernd Hofmann).

\bibliography{oversmooth_banach}
\end{document}